\newtheorem{defn}{Definition}
\newtheorem{thm}[defn]{Theorem}
\newtheorem{cor}[defn]{Corollary}
\newtheorem{lem}[defn]{Lemma}
\newtheorem{prop}[defn]{Proposition}
\theoremstyle{plain}
\newtheorem{rem}[defn]{Remark}
\theoremstyle{remark}
\newtheorem{exam}{Example}
\numberwithin{equation}{section} \numberwithin{defn}{section}
\newcommand\ed{\operatorname{End}}
\renewcommand\Im{\operatorname{Im}}
\newcommand\Ker{\operatorname{Ker}}
\begin{document}

\title[Infinite Moore-Penrose Inverse]{Moore-Penrose Inverse of some Linear Maps \\ on Infinite-Dimensional Vector Spaces}
\author{V\'ictor Cabezas S\'anchez (*)  \\ Fernando Pablos Romo (**)}

\address{ Departamento de Matem\'aticas, Universidad de Salamanca, Plaza de la Merced 1-4, 37008 Salamanca, Espa\~na}
 \email{ (*) v.cabezas@usal.es}
 \email{ (**) fpablos@usal.es}

\keywords{infinite-dimensional vector space, Moore-Penrose inverse, infinite linear system}
\thanks{2010 Mathematics Subject Classification: 34A30, 15A06, 15A04
\\ This work is partially supported by a collaboration-fellowship of the Spanish Government (*), and by the
Spanish Government research projects nos. MTM2015-66760-P and PGC2018-099599-B-I00 and the Regional Government of Castile and Leon research project no. J416/463AC03 (**)}

\begin{abstract} The aim of this work is to characterize linear maps of inner pro\-duct infinite-dimensional vector spaces where the Moore-Penrose inverse exists. This MP inverse generalizes the well-known  Moore-Penrose inverse of a matrix $A\in  \text{Mat}_{n\times m} ({\mathbb C})$. Moreover, a method for the computation of the MP inverse of some endomorphisms on infinite-dimensional vector spaces is given. As an application, we study the least norm solution of an infinite linear system from the Moore-Penrose inverse offered.
\end{abstract}

\maketitle

\bigskip

\setcounter{tocdepth}1

\tableofcontents
\bigskip

\section{Introduction}

 Given again a matrix $A\in  \text{Mat}_{n\times m} ({\mathbb C})$, the Moore-Penrose inverse of $A$ is the unique matrix $A^\dagger \in \text{Mat}_{m\times n} ({\mathbb C})$ such that:

\begin{itemize}

\item $A \, A^\dagger \, A = A$;

\item $A^\dagger \, A \, A^\dagger = A^\dagger$;

\item $(A \, A^\dagger)^* = A \, A^\dagger$;

\item $(A^\dagger \, A)^* = A^\dagger \, A$;

\end{itemize}

$B^*$ being the  conjugate transpose of the matrix $B$.

The Moore-Penrose inverse of $A$ always exists, it is a reflexive generalized inverse of $A$, $[A^\dagger]^\dagger = A$ and, if $A \in \text{Mat}_{n\times n} ({\mathbb C})$ is non-singular, then $A^\dagger$  coincides with the inverse matrix $A^{-1}$. 

Recently, generalized inverses of matrices $A\in  \text{Mat}_{n\times m} ({\mathbb C})$ have been extended to some linear maps on infinite-dimensional vector spaces. Indeed, the authors have computed explicit solutions of infinite systems of linear equations from reflexive generalized inverses of finite potent endomorphisms in \cite{CaP2}  and, also, the second-named author has generalized the notion of Drazin inverse to finite potent endomorphisms in \cite{Pa-Dr}.

The aim of this work is to characterize linear maps of inner product infinite-dimensional vector spaces where the Moore-Penrose (MP) inverse exists. This MP inverse generalizes the Moore-Penrose inverse $A^\dagger$ of a matrix $A\in  \text{Mat}_{n\times m} ({\mathbb C})$. Moreover,  a method for the computation of the MP inverse of some endomorphisms on infinite-dimensional vector spaces is given. As an application, we study the least norm solution of an infinite linear system from the Moore-Penrose inverse offered.

The paper is organized as follows.  In section \ref{s:pre} we recall the basic definitions of this work:  inner product vector spaces, finite potent endomorphisms, reflexive generalized inverse and Moore-Penrose inverse of a  $(n\times m)$-matrix. Also, in this section, we  briefly describe the construction of Jordan bases for endomorphisms admitting an annihilator polynomial.

Section \ref{s:mo-l} contains the main results the this work: the definition of linear map admissible for the Moore-Penrose inverse (Definition \ref{def:inf-Mo-Pen-adm}), the proof of the existence and uniqueness of the MP inverse for these linear maps (Theorem \ref{th:MThTN}) and the conditions for computing the MP inverse for some endomorphisms on infinite-dimensional vector spaces from the MP inverses of $(n\times n)$-matrices (Theorem \ref{t:char}).

Finally, Section \ref{s:study} is devoted to study infinite systems of linear equations from the Moore-Penrose Inverse. Thus, Proposition \ref{p:study-least} shows that if $(V,g)$ and $(W,{\bar g})$ are two arbitrary inner product vector spaces over $\mathbb R$ of $\mathbb C$, $f \colon V \to W$ is a linear map admissible for the Moore-Penrose inverse and $f (x) = w$ is a linear system, then $f^\dagger (w)$ is the unique minimal least ${\bar g}$-norm solution of this linear system.

\medskip

\section{Preliminaries} \label{s:pre}

This section is added for the sake of completeness.

\subsection{Inner Product Vector Spaces} \label{ss:inner-product}

Let $k$ be the field of the real numbers or the field of the complex numbers, and let $V$ be a $k$-vector space.

    An inner product on $V$ is a map $g \colon V \times V \to k$ satisfying that:

\begin{itemize}

\item $g$ is linear in its first argument: $$g(\lambda v_1 + \mu v_2, v') = \lambda g(v_1,v') + \mu g(v_2,v')  \text{ for every }  v_1,v_2, v' \in V\, ;$$

\item $g (v', v) = {\overline {g (v,v')}}$ for all $v,v' \in V$, where ${\overline {g (v,v')}}$ is the complex conjugate of $g(v,v')$;

\item  $g$ is positive definite: $$g(v,v) \geq 0 \text{ and } g(v,v) = 0 \Longleftrightarrow v = 0\, .$$

\end{itemize}

 Note that $g(v,v) \in \mathbb R$  for each $v\in V$, because $g(v,v) = {\overline {g(v,v)}}$.

A pair $(V,g)$ is named ``inner product vector space''.

If $(V, g)$ is an inner product vector space over $\mathbb C$, it is clear that $g$ is antilinear in its second argument, that is: $$g(v, \lambda v'_1 + \mu v'_2) = {\bar \lambda} g(v,v'_1) + {\bar \mu} g(v,v'_2)$$\noindent for all $v, v'_1,v'_2 \in V$, and $\bar \lambda$ and $\bar \mu$ being the conjugates of $\lambda$ and $\mu$ respectively.

Nevertheless, if $(V$, g) is an inner product vector space over $\mathbb R$, then $g$ is symmetric and bilinear.

The norm on an inner product vector space $(V,g)$ is the real-valued function $$\begin{aligned} \Vert \cdot \Vert_g \colon V &\longrightarrow \mathbb R \\ v &\longmapsto + \sqrt{g(v,v)} \, , \end{aligned}$$\noindent and the distance is the map $$\begin{aligned} d_g \colon V \times V &\longrightarrow \mathbb R \\ (v,v') &\longmapsto \Vert v' - v \Vert_g \, . \end{aligned}$$

Simple examples of  inner product vector spaces are Euclidean  finite-dimensional real vector spaces and complex Hilbert spaces.

Let us now consider two inner product vector spaces: $(V,g)$ and $(W,{\bar g})$. If $f\colon V \to W$ is a linear map, a linear operator $f^*\colon W \to V$ is called the adjoint of $f$ when $$g (f^*(w), v) = {\bar g} (w, f(v))\, ,$$\noindent for all $v\in V$ and $w\in W$. If $f\in \ed_k (V)$, we say that $f$ is self-adjoint when $f^* = f$.

Moreover, if  $(V,g)$ and $(W,{\bar g})$ are finite-dimensional  inner vector spaces over $\mathbb C$, $B = \{v_1, \dots, v_m\}$ and $B' = \{w_1, \dots, w_n\}$ are orthonormal bases of $V$ and $W$ respectively, $f\colon V \to W$ is a linear map, $A\in \text{Mat}_{n\times m} ({\mathbb C})$ and $f\equiv A$ in these bases, then $f^* \equiv A^*\in \text{Mat}_{m\times n} ({\mathbb C})$ in the same bases, where $A^*$ is the conjugate transpose of $A$.

\medskip

\subsection{Jordan bases for endomorphisms admitting an annihilator polynomial} \label{s:jordan-basis-anh}

\medskip

Let $V$ be an arbitrary vector space over a ground field $k$, and let $f\in \ed_k(V)$ be an endomorphism of $V$ admitting an annihilator polynomial $$a_f(x) = p_1(x)^{n_1}\cdot \dots \cdot p_r(x)^{n_r}\, ,$$\noindent $p_i(x)$ being irreducible polynomials in $k[x]$.

For each $j\in \{1,\dots,r\}$, we can  consider
$$\nu_i (V,p_j(f)) = \dim_{K_j} \big ( \Ker p_j(f)^i \big / [\Ker p_j(f)^{i-1} + p_j(f) \Ker p(f)^{i+1}] \big )\, ,$$\noindent with $K_j = k[x] \big / p_j(x)$. Henceforth, $S_{{\nu}_i (V,p_j(f))}$ will be a set such that $\# S_{{\nu}_i (V,p_j(f))} = {\nu}_i (V,p_j(f))$, with $S_{{\nu}_i (V,p_j(f))} \cap S_{{\nu}_h (V,p_j(f))} = \emptyset$ for $i\ne h$.

According to the statements of \cite{Pa7} there exist families of vectors $\{v^{ij}_h\}_{ h \in S_{{\nu}_i (V,p_j(f))}}$ with
 $$\begin{aligned} v^{ij}_h &\in \Ker p_j(f)^i \\ v^{ij}_h &\notin \Ker p_j(f)^{i-1} + p_j(f) \Ker p_j(f)^{i+1}\, ,\end{aligned}$$\noindent for all $1\leq j \leq r$ and $1\leq i \leq n_j$, such that if we set
 $$H^{ij}_h = <v^{ij}_h>_f = {\underset {0\leq s \leq i - 1} \bigcup}  \{p_j(f)^{s}[v^{ij}_h], p_j(f)^{s} [f(v^{ij}_h)], \dots , p_j(f)^{s}[f^{d_j -1}(v^{ij}_h)]\}\, ,$$\noindent then
$$\underset {\begin{aligned}1 &\leq j \leq r  \\ 1 &\leq i \leq n_j \\  h &\in S_{{\nu}_i (V,p_j(f))} \end{aligned}} {\bigcup} <v^{ij}_h>_f$$\noindent is a Jordan basis of $V$ for $f$, and this basis determines a decomposition \begin{equation} \label{eq:jordan-admi-poly} V = \underset {\begin{aligned}1 &\leq j \leq r  \\ 1 &\leq i \leq n_j \\  h &\in S_{{\nu}_i (V,p_j(f))} \end{aligned}} {\bigoplus} H^{ij}_h\,  .\end{equation}

\medskip

\begin{exam}[Jordan bases for a nilpotent endomorphism] \label{ex:nilpotent-basis}

Let $V$  again be a vector space over an arbitrary field $k$ and let $f\in\ed_k (V)$ be a nilpotent endomorphism. If n is the nilpotency index of $f$,  setting $W_i^f = \Ker f^i/[\Ker f^{i-1} + f(\ker f^{i+1})]$ with $i\in \{1,2,\dots, n\}$, $\mu_i (V,f) = \text{dim}_k W_i^f$ and $S_{\mu_i (V,f)}$ a set such that $\# S_{\mu_i (V,f)} = \mu_i (V,f)$ with $S_{\mu_i (V,f)} \cap S_{\mu_j (V,f)} = \emptyset$ for all $i \ne j$, one has that there exists a family of vectors $\{{ {v_{s_i}}}\}$  that determines a Jordan basis of $V$ for $f$: \begin{equation} \label{eq:jordan-basis-B} B = \underset {\begin{aligned} s_i &\in S_{{\mu}_i (V,f)} \\ 1 &\leq i \leq n \end{aligned}} {\bigcup} \{{ {v_{s_i}}}, f ({{ v_{s_i}}}), \dots , f^{i-1} ({ {v_{s_i}}})\}\, .\end{equation}
Moreover, if we write $H_{s_i}^f = \langle { {v_{s_i}}}, f ({{ v_{s_i}}}), \dots , f^{i-1} ({ {v_{s_i}}}) \rangle$, the basis $B$ induces a decomposition \begin{equation} \label{eq:decomp} V =  \underset {\begin{aligned} s_i &\in S_{{\mu}_i (V,f)} \\ 1 &\leq i \leq n \end{aligned}} \bigoplus H_{s_i}^f\, .\end{equation}

\end{exam}

\medskip

\subsection{Finite Potent Endomorphisms} \label{ss:finite-potent}

Let $k$ be an arbitrary field, let $V$ be a $k$-vector space and let $\varphi \in \ed_k (V)$. We say
that $\varphi$ is ``finite potent'' if $\varphi^n V$ is finite
dimensional for some $n$. This definition was introduced by J. Tate in \cite{Ta} as a basic tool for his elegant definition
of Abstract Residues.

 In 2007 M. Argerami, F. Szechtman and R. Tifenbach showed in \cite{AST} that an endomorphism $\varphi$ is
finite potent if and only if $V$ admits a $\varphi$-invariant
decomposition $V = U_\varphi \oplus W_\varphi$ such that
$\varphi_{\vert_{U_\varphi}}$ is nilpotent, $W_\varphi$ is finite
dimensional, and $\varphi_{\vert_{W_\varphi}} \colon W_\varphi
\overset \sim \longrightarrow W_\varphi$ is an isomorphism.

\medskip

\subsection{Reflexive Generalized Inverses} \label{ss:reflexive}

Let $\mathbb C$ be the field of complex numbers. Given a matrix $A\in \text{Mat}_{n\times m} ({\mathbb C})$, a reflexive generalized inverse of $A$ is a matrix $A^+ \in   \text{Mat}_{m\times n} ({\mathbb C})$ such that:

\begin{itemize}

\item $A \, A^+ \, A = A$;

\item $A^+ \, A \, A^+ = A^+$.

\end{itemize}

In general, the reflexive generalized inverse of a matrix $A$ is not unique.

\medskip

The notion of reflexive generalized inverse in arbitrary vector spaces is the following:

\begin{defn} \label{df:gen-lin-map} If $V$ and $W$ are $k$-vector spaces, given a morphism $f\colon V \to W$, a linear map $f^+ \colon W \to V$ is a ``reflexive generalized inverse'' of $f$ when:

\begin{itemize}

\item $f \circ f^+ \circ f = f$;

\item $f^+ \circ f \circ f^+ = f^+$.

\end{itemize}

\end{defn}

For every reflexive generalized inverse $f^+$ of $f$, if $w\in W$, it is known that \begin{equation} \label{eq:char-im-rel} w\in \text{Im } f \Longleftrightarrow (f\circ f^+) (w) = w\, .\end{equation}

\subsection{Moore-Penrose Inverse of an $(n\times m)$-Matrix} \label{ss:Moore-finite-prem}

 Given again a matrix $A\in  \text{Mat}_{n\times m} ({\mathbb C})$, the Moore-Penrose inverse of $A$ is a matrix $A^\dagger \in \text{Mat}_{m\times n} ({\mathbb C})$ such that:

\begin{itemize}

\item $A \, A^\dagger \, A = A$;

\item $A^\dagger \, A \, A^\dagger = A^\dagger$;

\item $(A \, A^\dagger)^* = A \, A^\dagger$;

\item $(A^\dagger \, A)^* = A^\dagger \, A$;

\end{itemize}

$B^*$ being the  conjugate transpose of the matrix $B$.

The Moore-Penrose inverse of $A$ always exists, it is unique, it is a reflexive generalized inverse of $A$, $[A^\dagger]^\dagger = A$ and, if $A \in \text{Mat}_{n\times n} ({\mathbb C})$ is non-singular, then the Moore-Penrose inverse of $A$  coincides with the inverse matrix $A^{-1}$. 

For details, readers are referred to \cite{CM}.

\medskip

\section{Moore-Penrose Inverse of Linear Maps on Infinite-Dimensional Vector Spaces} \label{s:mo-l}

\medskip

Henceforth, given a family of subspaces $\{H_i\}_{i\in I}$ of an arbitrary vector space $V$ over a field $k$,  we shall write $V = \underset {i\in I} \oplus H_i$ to indicate that the natural morphism $$\begin{aligned} \underset {i\in I} \oplus H_i &\longrightarrow V \\ (v_i) &\longmapsto \sum_{i\in I} v_i\, ,\end{aligned}$$\noindent is an isomorphism.

\medskip

This section is devoted to proving the existence of a Moore-Penrose inverse of some linear maps on infinite-dimensional vector spaces, such that it generalizes the notion and the properties of the Moore-Penrose inverse of an $(n\times m)$-matrix with entries in  $\mathbb C$. Our generalization will be valid for linear maps on inner product vector spaces over $k = \mathbb R$ and $k = \mathbb C$ and we shall give conditions for computing the Moore-Penrose inverse of endomorphisms on infinite-dimensional vector spaces.

To do so, we shall first study some properties of the Moore-Penrose inverse  of an endomorphism on a finite-dimensional inner product space.

\medskip

\subsection{Moore-Penrose inverse of an endomorphism on finite-dimensional inner product vector spaces}

Let us now consider a finite dimensional inner product vector space $(E,g)$ over $k = \mathbb R$ or $k = \mathbb C$.

If $f  \in \ed_{k} (E)$, one has that $E = \Ker f \oplus [\Ker f]^\perp = \Im f \oplus [\Im f]^\perp$, and there exists an isomorphism $$f \colon [\Ker f]^\perp \overset {\sim} \longrightarrow \Im f\, .$$

Thus, the Moore-Penrose of $f$ is the unique  linear map $f^\dagger \in \ed_k (E)$ such that \begin{equation} \label{def:Moore-Penrose}  f^\dagger (e) = \left \{ \begin{aligned} (f_{\vert_{ [\Ker f]^\perp}})^{-1} (e) \quad &\text{ if } \quad e\in \Im f \\ 0 \quad \quad &\text{ if } \quad e\in [\Im f]^\perp \end{aligned} \right . \, .\end{equation}

\medskip

It is known that $f^\dagger$ is the unique linear map such that:

\begin{itemize}

\item $f^\dagger$ is a reflexive generalized inverse of $f$;

\item $f^\dagger \circ f$ and $f \circ f^\dagger$ are self-adjoint.

\end{itemize}

\smallskip

\begin{defn} \label{def:descomp123} If  $f  \in \ed_{k} (E)$ and ${\mathcal H}_f = \{H_1, \dots, H_n\}$ is a family of subspaces of $E$ invariants for $f$ such that $E = H_1 \oplus \dots \oplus H_n$, we define the endomorphism $f^+_{{\mathcal H}_f} \in \ed_{k} (E)$ as the unique linear map such that $$[f^+_{{\mathcal H}_f}]_{\vert_{H_i}} = {[f_{\vert_{H_i}}]^\dagger} \quad \text{ for each } \quad i\in \{1, \dots, n\}\, .$$

\end{defn}

\medskip

If we denote $f_i = f_{\vert_{H_i}}$ for each $i\in \{1, \dots, n\}$, it is clear that for every vector $e\in E$, such that $e = h_{j_1} + \dots + h_{j_n}$ with $h_{j_i} \in H_{i}$, then $$f_{{\mathcal H}_f}^+ (e) =  f_{1}^\dagger (h_{j_1}) + \dots +  f_{n}^\dagger (h_{j_n})\, .$$

\smallskip

Moreover, it  is immediately observed, from Definition \ref{def:descomp123} and from the properties of the Moore-Penrose inverse, that  $f^+_{{\mathcal H}_f}$ is a reflexive generalized inverse of $f$ for every family ${\mathcal H}_f$.

\medskip

Keeping the previous notation and given a subspace $W\subseteq E$, such that $W \subset H_{i}$ for a certain $i\in \{1, \dots, n\}$, we shall denote $$W_i^\perp = \{v_i\in H_i \text{ such that } g(w,v_i) = 0 \text{ for all } w \in W\}\, .$$

\medskip

\begin{lem} \label{l:or-dcomp} If $U\subset E$ is a subspace and $\{U_1, \dots, U_n\}$ are subspaces of $U$ such that $U = U_1 \oplus \dots \oplus U_n$ with $U_i \subseteq H_i$, then $$[U_1]_1^\perp \oplus \dots \oplus [U_n]_n^\perp \subseteq U^\perp \Longleftrightarrow 
[U_i]_i^\perp \subset [\sum_{j\ne i} U_j]^\perp \text{ for all } i\in \{1, \dots, n\}\, .$$

\end{lem}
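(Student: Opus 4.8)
The plan is to reduce the biconditional to a single elementary observation: a vector $v_i \in [U_i]_i^\perp$ is, by the very definition of $[U_i]_i^\perp$, already orthogonal to all of $U_i$, so its membership in $U^\perp$ hinges only on its orthogonality to the complementary part $\sum_{j\ne i} U_j$. The splitting $U = U_i + \sum_{j\ne i} U_j$ is the engine of the whole argument, and it lets me avoid any assumption that the $H_i$ are mutually $g$-orthogonal, which the statement does not grant.

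First I would record a harmless structural remark. Since $[U_i]_i^\perp \subseteq H_i$ and $E = H_1 \oplus \dots \oplus H_n$, the summands $[U_1]_1^\perp, \dots, [U_n]_n^\perp$ lie in distinct pieces of the decomposition, so their sum is automatically direct and the symbol $\oplus$ is justified. More importantly, because $U^\perp$ is a subspace of $E$ and hence closed under addition, the inclusion $[U_1]_1^\perp \oplus \dots \oplus [U_n]_n^\perp \subseteq U^\perp$ is equivalent to the family of inclusions $[U_i]_i^\perp \subseteq U^\perp$ for every $i$. This peels the left-hand side of the biconditional into its individual pieces and reduces everything to proving, for each fixed $i$, that $[U_i]_i^\perp \subseteq U^\perp$ if and only if $[U_i]_i^\perp \subseteq [\sum_{j\ne i} U_j]^\perp$.

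For this per-index equivalence I would fix $i$ and take an arbitrary $v_i \in [U_i]_i^\perp$, so that $g(w, v_i) = 0$ for every $w \in U_i$. Writing an arbitrary $u \in U$ as $u = u_i + u'$ with $u_i \in U_i$ and $u' \in \sum_{j\ne i} U_j$ (possible since $U = U_1 \oplus \dots \oplus U_n$), linearity of $g$ in its first argument gives $g(u, v_i) = g(u_i, v_i) + g(u', v_i) = g(u', v_i)$. Hence $v_i \in U^\perp$ holds precisely when $g(u', v_i) = 0$ for all $u' \in \sum_{j\ne i} U_j$, that is, precisely when $v_i \in [\sum_{j\ne i} U_j]^\perp$. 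As this holds for every $v_i$, the two inclusions are equivalent, and combining with the first step settles both directions of the lemma simultaneously.

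I expect no serious obstacle. The only point requiring care is to keep track of which orthogonal complement is taken inside all of $E$, namely $U^\perp$ and $[\sum_{j\ne i} U_j]^\perp$, and which is taken inside the single summand $H_i$, namely $[U_i]_i^\perp$. The natural trap would be to expand $g(u,v)$ across the full decomposition $E = H_1 \oplus \dots \oplus H_n$ and carelessly discard the cross terms $g(u_j, v_k)$ with $j \ne k$; this is sidestepped entirely by reducing to one component $v_i$ at a time and invoking the \emph{definition} of $[U_i]_i^\perp$ rather than any geometry of the $H_j$.
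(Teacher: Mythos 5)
Your proof is correct and takes essentially the same route as the paper's: both reduce the direct-sum inclusion to the per-summand inclusions $[U_i]_i^\perp \subseteq U^\perp$ and exploit the key fact that an element of $[U_i]_i^\perp$ is already orthogonal to $U_i$, so only its orthogonality to $\sum_{j\ne i} U_j$ matters. The only cosmetic difference is that you package both directions into a single element-wise equivalence, whereas the paper handles the forward direction via monotonicity of orthogonal complements and the backward direction by expanding the inner product term by term.
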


\begin{proof} 

$$\begin{aligned} \text{If } [U_1]_1^\perp \oplus \dots \oplus [U_n]_n^\perp \subseteq U^\perp &\Longrightarrow   [U_i]_i^\perp \subseteq U^\perp \text{ for all } i\in \{1, \dots, n\} \Longrightarrow \\ &\Longrightarrow  [U_i]_i^\perp \subseteq [\sum_{j\ne i} U_j]^\perp \text{ for all } i\in \{1, \dots, n\}\, . \end{aligned} $$

Conversely, 
$$\begin{aligned}& \text{if } [U_i]_i^\perp \subseteq [\sum_{j\ne i} U_j]^\perp \text{ for all } i\in \{1, \dots, n\} \Longrightarrow \\ &g(v_1 + \dots + v_n, u_1 + \dots + u_n) = 0 \text{ with } v_i \in [U_i]_i^\perp \text{ and } u_i \in U_i \text{ for all } i\in \{1, \dots, n\}  \\ &\Longrightarrow [U_1]_1^\perp \oplus \dots \oplus [U_n]_n^\perp \subseteq U^\perp\, . \end{aligned}$$

\end{proof}

\medskip

\begin{lem} \label{lem:prop-dar-finite} Using the previous notation, we have that:

\begin{enumerate}

\item $\Im f = \Im f_1 \oplus \dots \oplus \Im f_n$;

\item If ${\mathcal H}_f^\perp = [\Im f_1]_1^\perp \oplus \dots \oplus [\Im f_n]_n^\perp$, then $E = \Im f \oplus {\mathcal H}_f^\perp$;

\item $\Ker f = \Ker f_1 \oplus \dots \oplus \Ker f_n$;

\item If ${\widetilde {\mathcal H}_f}^\perp = [\Ker f_1]_1^\perp \oplus \dots \oplus [\Ker f_n]_n^\perp$, then $$E = \Ker f \oplus {\widetilde {\mathcal H}_f}^\perp\, ;$$

\item $f$ induces an isomorphism between ${\widetilde {\mathcal H}_f}^\perp$ and $\Im f$ and  \begin{equation} \label{def:fini-reflx-345} [f^+_{{\mathcal H}_f}] (e) = \left \{ \begin{aligned} (f_{\vert_{\widetilde {\mathcal H}_f^\perp}})^{-1} (e) \quad &\text{ if } \quad e\in \Im f \\ 0 \quad \quad &\text{ if } \quad e\in {\mathcal H}_f^\perp \end{aligned} \right . \, .\end{equation}

\end{enumerate}

\end{lem}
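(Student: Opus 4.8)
The plan is to reduce everything to the individual blocks $H_i$. Since each $H_i$ is a subspace of the inner product space $(E,g)$, it is itself a finite-dimensional inner product space under the restriction of $g$, and because $H_i$ is $f$-invariant the restriction $f_i = f_{\vert_{H_i}}$ is an endomorphism of $H_i$. All five statements will then follow by assembling the local data over $i\in\{1,\dots,n\}$ and using only the direct sum $E = H_1\oplus\dots\oplus H_n$. For \textbf{(1)} and \textbf{(3)} I would write an arbitrary $e = h_1 + \dots + h_n$ with $h_i\in H_i$ and note $f(e) = f_1(h_1) + \dots + f_n(h_n)$ with each $f_i(h_i)\in H_i$; surjectivity onto $\Im f_1 + \dots + \Im f_n$ is immediate and directness is automatic because $\Im f_i\subseteq H_i$, while the same expression shows $f(e)=0$ if and only if every $f_i(h_i)=0$, giving the kernel decomposition.

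For \textbf{(2)} and \textbf{(4)}, the key point is that inside each inner product space $H_i$ one has the orthogonal splittings $H_i = \Im f_i \oplus [\Im f_i]_i^\perp$ and $H_i = \Ker f_i \oplus [\Ker f_i]_i^\perp$. Substituting these into $E = \bigoplus_i H_i$ and regrouping the image (resp. kernel) summands apart from the perpendicular summands yields $E = \Im f \oplus {\mathcal H}_f^\perp$ and $E = \Ker f \oplus {\widetilde {\mathcal H}_f}^\perp$, after invoking (1) (resp. (3)). I would emphasise that these are direct sums but in general \emph{not} orthogonal ones, since distinct $H_i$ need not be mutually orthogonal in $(E,g)$; detecting exactly when orthogonality does hold is the role of Lemma \ref{l:or-dcomp}, and this failure is precisely why $f^+_{{\mathcal H}_f}$ depends on the chosen family ${\mathcal H}_f$ rather than being intrinsic.

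For \textbf{(5)}, the decomposition $E = \Ker f \oplus {\widetilde {\mathcal H}_f}^\perp$ from (4) forces $f_{\vert_{{\widetilde {\mathcal H}_f}^\perp}}$ to have kernel $\Ker f \cap {\widetilde {\mathcal H}_f}^\perp = 0$ and image $f({\widetilde {\mathcal H}_f}^\perp) = f(E) = \Im f$, so it is the asserted isomorphism ${\widetilde {\mathcal H}_f}^\perp \overset{\sim}{\longrightarrow} \Im f$. To identify $f^+_{{\mathcal H}_f}$ with the piecewise map \eqref{def:fini-reflx-345}, I would use that by (2) both are linear maps defined on $E = \Im f \oplus {\mathcal H}_f^\perp$, so it suffices to compare them on each summand. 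On ${\mathcal H}_f^\perp$ an element is $\sum_i z_i$ with $z_i\in[\Im f_i]_i^\perp$, and $f^+_{{\mathcal H}_f}(\sum_i z_i) = \sum_i f_i^\dagger(z_i) = 0$ by Definition \ref{def:descomp123} and the vanishing branch of \eqref{def:Moore-Penrose}; on $\Im f$ an element is $\sum_i w_i$ with $w_i\in\Im f_i$, and $f^+_{{\mathcal H}_f}(\sum_i w_i) = \sum_i f_i^\dagger(w_i) = \sum_i (f_{\vert_{[\Ker f_i]_i^\perp}})^{-1}(w_i)$, a vector lying in ${\widetilde {\mathcal H}_f}^\perp$ and sent by $f$ to $\sum_i w_i$, hence equal to $(f_{\vert_{{\widetilde {\mathcal H}_f}^\perp}})^{-1}(\sum_i w_i)$, which is exactly \eqref{def:fini-reflx-345}.

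The direct-sum bookkeeping is routine; the one step demanding genuine care is this final verification in (5), because it requires keeping straight that each perpendicular complement $[\,\cdot\,]_i^\perp$ is taken \emph{inside} the block $H_i$ and not inside $E$, and that the ambient splitting $E = \Im f \oplus {\mathcal H}_f^\perp$ is merely a direct sum, so $f^+_{{\mathcal H}_f}$ must be reconstructed branch by branch from the Moore-Penrose inverses of the $f_i$ rather than read off from a single orthogonal projection.
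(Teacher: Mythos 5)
Your proposal is correct and follows essentially the same route as the paper: parts (1) and (3) by decomposing vectors across the $f$-invariant blocks $H_i$, parts (2) and (4) by substituting the orthogonal splittings $H_i = \Im f_i \oplus [\Im f_i]_i^\perp$ (resp.\ $H_i = \Ker f_i \oplus [\Ker f_i]_i^\perp$) into $E = H_1 \oplus \dots \oplus H_n$ and regrouping, and (5) as a consequence of (2), (4) and Definition \ref{def:descomp123}. The only difference is that you spell out the branch-by-branch verification of \eqref{def:fini-reflx-345}, which the paper dismisses as a ``direct consequence''; your expanded argument is valid and matches what the paper intends.
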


\begin{proof} 

\quad

\begin{enumerate}

\item It is clear that  $\Im f_1 \oplus \dots \oplus \Im f_n \subseteq \Im f$. Moreover, given $e \in \Im f$, if $e = f(e')$ with $e' = e'_{j_1} + \dots + e'_{j_n}$ and $e'_{j_i} \in H_{i_j}$ for all $i\in \{1, \dots, n\}$, then $$e = f(e'_{j_1}) + \dots + f(e'_{j_n}) \in \Im f_1 \oplus \dots \oplus \Im f_n \, ,$$\noindent because $f(v'_{j_i}) \in H_{i}$ for every $i\in \{1, \dots, n\}$.

\item  Since $H_i = \Im f_i \oplus [\Im f_i]_i^\perp$ for all $i\in \{1, \dots, n\}$, we have that $$\begin{aligned} E &=  H_1 \oplus \dots \oplus H_n =   (\Im f_1 \oplus [\Im f_1]_1^\perp) \oplus \dots \oplus  (\Im f_n \oplus [\Im f_n]_n^\perp) = \\ &=  (\Im f_1 \oplus \dots \oplus \Im f_n) \oplus ([\Im f_1]_1^\perp \oplus \dots \oplus [\Im f_n]_n^\perp) = \Im f \oplus {\mathcal H}_f^\perp\, .\end{aligned}$$

\item It is immediate that  $\Ker f_1 \oplus \dots \oplus \Ker f_n \subseteq \Ker f$. Furthermore, if ${\bar e} \in \Ker f$ and $${\bar e} = {\bar e}_{j_1} + \dots + {\bar e}_{j_n}\quad \text{ with } \quad {\bar e}_{j_i} \in H_{i} \quad \text{ for every } i\in \{1, \dots, n\}\, ,$$\noindent since $$ f({\bar e}_{j_1}) + \dots + f({\bar e}_{j_k}) = 0 \text{ and } H_{i} \cap [\sum_{r\ne i} H_{r}] = {0}\, ,$$\noindent we conclude that $f({\bar e}_{j_i}) = 0$ for all $i\in \{1, \dots, n\}$, from which it is deduced that $\Ker f \subseteq\Ker f_1 \oplus \dots \oplus \Ker f_n$.

\item Similar to (2).

\item It is a direct consequence of (2), (4) and the definition of $f^+_{{\mathcal H}_f}$.

\end{enumerate}

\end{proof}

\medskip

\begin{lem} \label{lem:counter-ex} With the previous notation, in general  ${\widetilde {\mathcal H}_f}^\perp \ne [\Ker f]^\perp$ and 
${\mathcal H}_f^\perp \ne [\Im f]^\perp$.

\end{lem}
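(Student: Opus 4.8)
The statement asserts that the decompositions $E = \Ker f \oplus {\widetilde{\mathcal H}_f}^\perp$ and $E = \Im f \oplus {\mathcal H}_f^\perp$ produced in Lemma \ref{lem:prop-dar-finite} need not be the canonical orthogonal decompositions $E = \Ker f \oplus [\Ker f]^\perp$ and $E = \Im f \oplus [\Im f]^\perp$. Since the claim is only that equality fails ``in general'', the plan is to exhibit a single explicit counterexample in which both inequalities hold simultaneously. The conceptual point driving such an example is that ${\mathcal H}_f^\perp$ and ${\widetilde{\mathcal H}_f}^\perp$ are assembled from orthogonal complements taken \emph{inside} each $H_i$; these internal complements coincide with the global complements $[\Im f]^\perp$ and $[\Ker f]^\perp$ precisely when the direct sum $E = H_1 \oplus \dots \oplus H_n$ is itself an orthogonal decomposition. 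Hence it suffices to choose an $f$-invariant decomposition whose summands are not mutually orthogonal.

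Concretely, I would take $E = \mathbb R^2$ with the standard inner product and standard basis $\{e_1, e_2\}$, and set $H_1 = \langle e_1 \rangle$ and $H_2 = \langle e_1 + e_2 \rangle$, so that $E = H_1 \oplus H_2$ while $g(e_1, e_1 + e_2) = 1 \ne 0$, i.e. the decomposition is skewed. For $f$ I would take the projection onto $H_2$ along $H_1$, that is, the endomorphism determined by $f(e_1) = 0$ and $f(e_1 + e_2) = e_1 + e_2$. Then $H_1$ and $H_2$ are automatically $f$-invariant, with $f_1 = f_{\vert_{H_1}} = 0$ and $f_2 = f_{\vert_{H_2}} = \text{id}_{H_2}$, so the family ${\mathcal H}_f = \{H_1, H_2\}$ satisfies the hypotheses of the previous lemmas.

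The remaining step is a direct computation of the subspaces involved. One finds $\Ker f = H_1 = \langle e_1 \rangle$ and $\Im f = H_2 = \langle e_1 + e_2 \rangle$, so the global complements are $[\Ker f]^\perp = \langle e_2 \rangle$ and $[\Im f]^\perp = \langle e_1 - e_2 \rangle$. On the other hand, since $\Ker f_1 = H_1$, $\Ker f_2 = 0$, $\Im f_1 = 0$ and $\Im f_2 = H_2$, the internal complements give ${\widetilde{\mathcal H}_f}^\perp = [\Ker f_1]_1^\perp \oplus [\Ker f_2]_2^\perp = 0 \oplus H_2 = \langle e_1 + e_2 \rangle$ and ${\mathcal H}_f^\perp = [\Im f_1]_1^\perp \oplus [\Im f_2]_2^\perp = H_1 \oplus 0 = \langle e_1 \rangle$. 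Comparing, ${\widetilde{\mathcal H}_f}^\perp = \langle e_1 + e_2 \rangle \ne \langle e_2 \rangle = [\Ker f]^\perp$ and ${\mathcal H}_f^\perp = \langle e_1 \rangle \ne \langle e_1 - e_2 \rangle = [\Im f]^\perp$, which establishes both inequalities.

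There is no serious obstacle here: the only thing to get right is the bookkeeping distinguishing ``orthogonal complement inside $H_i$'' from ``orthogonal complement inside $E$'', together with the recognition that choosing a non-orthogonal invariant decomposition is exactly what forces the two to differ. If desired, the example can be recorded inside an \texttt{exam} environment for clarity rather than folded into the proof.
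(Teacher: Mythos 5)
Your proposal is correct, and it shares the paper's overall strategy---exhibit an explicit $f$-invariant but non-orthogonal decomposition and compute both sides---but the counterexample itself is genuinely different and considerably more economical. The paper works in a four-dimensional space $E = \langle v_1, v_2, v_3, v_4\rangle$ with $H_1 = \langle v_1, v_2\rangle$, $H_2 = \langle v_1+v_2+v_3, v_4\rangle$ and a map whose restrictions $f_1, f_2$ both have nontrivial kernel \emph{and} nontrivial image, whereas you use the minimal possible setting: $E = \mathbb{R}^2$, an oblique projection, $f_1 = 0$ and $f_2 = \operatorname{id}$. Your computations check out: $\Ker f = \langle e_1\rangle$, $\Im f = \langle e_1+e_2\rangle$, ${\widetilde{\mathcal H}_f}^\perp = \langle e_1+e_2\rangle \ne \langle e_2\rangle = [\Ker f]^\perp$ and ${\mathcal H}_f^\perp = \langle e_1\rangle \ne \langle e_1-e_2\rangle = [\Im f]^\perp$, so both inequalities hold at once and the lemma is established. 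What your version buys is transparency: in dimension two the mechanism (skewness of the decomposition) is visible at a glance. What the paper's bulkier example buys is reuse: it is recycled immediately afterwards, in the example following the corollary, to compute $f^+_{{\mathcal H}_f}$ and $f^\dagger$ explicitly and exhibit a reflexive generalized inverse that is not the Moore--Penrose inverse in a non-degenerate way, i.e.\ with neither restriction being zero or the identity. One caveat on your motivating remark: the claim that the internal complements coincide with the global ones \emph{precisely when} the $H_i$ are mutually orthogonal overstates the necessity direction. For instance, taking $f = \operatorname{id}$ and any non-orthogonal invariant decomposition still gives ${\mathcal H}_f^\perp = 0 = [\Im f]^\perp$ and ${\widetilde{\mathcal H}_f}^\perp = E = [\Ker f]^\perp$; the correct criterion involves orthogonality conditions on the subspaces $\Im f_i$ and $\Ker f_i$, as recorded in Lemma \ref{lem:equ-imf-345} and Lemma \ref{lem:equ-kerf-345}, not on the $H_i$ themselves. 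Since that remark is only motivation and not a step of your argument, the proof itself stands.
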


\begin{proof} The statement is deduced from the following counter-example.

Let $E = \langle v_1, v_2, v_3, v_4 \rangle$ be a inner product $k$-vector space, $B = \{v_1, v_2, v_3, v_4\}$ being an orthonormal basis, and let us consider the endomorphism $f\colon E \to E$ defined as:

$$f(v_i) = \left \{ \begin{aligned} \quad v_1 \qquad \quad &\text{ if } \quad i = 1,2\, ; \\ \quad -2v_1 \qquad \quad &\text{ if } \quad i = 3\, ; \\ v_1 + v_2 + v_3 \quad &\text{ if } \quad i = 4\, .\end{aligned} \right .$$

If we set $H_1 = \langle v_1, v_2 \rangle$ and $H_2 = \langle v_1 + v_2 + v_3, v_4 \rangle$, it is clear that $H_1$ and $H_2$ are $f$-invariant and, if we again denote $f_1 =  f_{\vert_{H_1}}$ and $f_2 =  f_{\vert_{H_2}}$, we have that:

\begin{itemize}

\item $\Im f_1 = \langle v_1 \rangle$ and $[\Im f_1]_1^\perp = \langle v_2 \rangle$;

\item $\Im f_2 = \langle v_1 + v_2 + v_3 \rangle$ and $[\Im f_2]_2^\perp = \langle v_4 \rangle$;

\item $\Im f = \langle v_1, v_1 + v_2 + v_3 \rangle$ and $[\Im f]^\perp = \langle v_2 - v_3, v_4 \rangle$;

\item $\Ker f_1 = \langle v_1 - v_2 \rangle$ and $[\Ker f_1]_1^\perp = \langle v_1 + v_2 \rangle$;

\item $\Ker f_2 = \langle v_1 + v_2 + v_3 \rangle$ and $[\Ker f_2]_2^\perp = \langle v_4 \rangle$;

\item $\Ker f = \langle v_1 - v_2, v_1 + v_2 + v_3\rangle$ and $[\Ker f]^\perp = \langle v_1 + v_2 -  2v_3, v_4 \rangle$.

Accordingly,  ${\widetilde {\mathcal H}_f}^\perp \ne [\Ker f]^\perp$ and 
${{\mathcal H}_f}^\perp \ne [\Im f]^\perp$ in this case.

\end{itemize}

\end{proof}

\medskip

A direct consequence of Lemma \ref{lem:counter-ex} is:

\begin{cor} Given an endomorphism $f  \in \ed_{k} (E)$, in general  $f^+_{{\mathcal H}_f}$ is not the Moore-Penrose inverse of $f$.

\end{cor}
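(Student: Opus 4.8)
The plan is to exploit the explicit description \eqref{def:fini-reflx-345} of $f^+_{{\mathcal H}_f}$ together with the characterization of the Moore-Penrose inverse recalled just before Definition \ref{def:descomp123}: $f^\dagger$ is the unique reflexive generalized inverse of $f$ for which both $f^\dagger \circ f$ and $f \circ f^\dagger$ are self-adjoint. Since $f^+_{{\mathcal H}_f}$ is already known to be a reflexive generalized inverse of $f$, it suffices to produce a single endomorphism $f$ and a single family ${\mathcal H}_f$ for which one of these two self-adjointness conditions fails; this is exactly what the counter-example of Lemma \ref{lem:counter-ex} will supply.

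First I would identify $f \circ f^+_{{\mathcal H}_f}$ as a projection. By Lemma \ref{lem:prop-dar-finite}(2) one has $E = \Im f \oplus {\mathcal H}_f^\perp$, and by \eqref{def:fini-reflx-345} the map $f^+_{{\mathcal H}_f}$ sends $u\in \Im f$ to $(f_{\vert_{{\widetilde {\mathcal H}_f}^\perp}})^{-1}(u)$ and annihilates ${\mathcal H}_f^\perp$. Composing with $f$ and using the isomorphism of Lemma \ref{lem:prop-dar-finite}(5), I get that $f \circ f^+_{{\mathcal H}_f}$ acts as the identity on $\Im f$ and as zero on ${\mathcal H}_f^\perp$; that is, $f \circ f^+_{{\mathcal H}_f}$ is the projection onto $\Im f$ along ${\mathcal H}_f^\perp$.

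The key step is then the elementary fact that a projection onto a subspace $M$ is self-adjoint if and only if its kernel equals $M^\perp$, i.e. if and only if it is the orthogonal projection onto $M$. Applying this with $M = \Im f$, the endomorphism $f \circ f^+_{{\mathcal H}_f}$ is self-adjoint precisely when ${\mathcal H}_f^\perp = [\Im f]^\perp$. But Lemma \ref{lem:counter-ex} exhibits $f$ and ${\mathcal H}_f = \{H_1, H_2\}$ with ${\mathcal H}_f^\perp \ne [\Im f]^\perp$, so in that example $f \circ f^+_{{\mathcal H}_f}$ is not self-adjoint. Hence $f^+_{{\mathcal H}_f}$ violates one of the two conditions characterizing $f^\dagger$, and therefore $f^+_{{\mathcal H}_f} \ne f^\dagger$.

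I do not expect a genuine obstacle here, since the result is flagged as a direct consequence of Lemma \ref{lem:counter-ex}; the only point requiring care is the passage from the inequality of subspaces to the inequality of maps. One might be tempted to argue that ${\mathcal H}_f^\perp \ne [\Im f]^\perp$ alone forces the maps to differ, but that deserves justification: the clean bridge is the self-adjointness criterion for projections, which converts the subspace inequality into the failure of the defining property of $f^\dagger$. Symmetrically, one could instead examine $f^+_{{\mathcal H}_f} \circ f$, the projection onto ${\widetilde {\mathcal H}_f}^\perp$ along $\Ker f$, and invoke ${\widetilde {\mathcal H}_f}^\perp \ne [\Ker f]^\perp$ to reach the same conclusion.
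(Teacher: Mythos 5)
Your proof is correct, and it hinges on the same counter-example as the paper's: the endomorphism $f$ and the family ${\mathcal H}_f=\{H_1,H_2\}$ constructed in Lemma \ref{lem:counter-ex}. The difference lies in how you pass from the subspace inequality ${\mathcal H}_f^\perp \ne [\Im f]^\perp$ to the inequality of maps. The paper treats this passage as immediate --- implicitly, $f^\dagger$ vanishes exactly on $[\Im f]^\perp$ by (\ref{def:Moore-Penrose}) while $f^+_{{\mathcal H}_f}$ vanishes exactly on ${\mathcal H}_f^\perp$ by (\ref{def:fini-reflx-345}), so the two maps have different kernels --- and then corroborates it in the Example that follows, where both $f^+_{{\mathcal H}_f}$ and $f^\dagger$ are computed explicitly on the basis $\{v_1,v_2,v_3,v_4\}$. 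You instead route the argument through the characterization of $f^\dagger$ as the unique reflexive generalized inverse whose two compositions with $f$ are self-adjoint: $f\circ f^+_{{\mathcal H}_f}$ is the projection onto $\Im f$ along ${\mathcal H}_f^\perp$ by parts (2) and (5) of Lemma \ref{lem:prop-dar-finite}, and a projection of a finite-dimensional inner product space is self-adjoint precisely when its kernel is the orthogonal complement of its image. This is slightly more work up front but buys something real: the argument is purely conceptual, never requires computing $f^\dagger$ itself (which in the paper's Example means inverting $f$ on $[\Ker f]^\perp$), and pinpoints exactly which Penrose condition fails for $f^+_{{\mathcal H}_f}$; the paper's explicit computation, in exchange, produces the formulas for both maps. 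Your two supporting facts --- the identification of $f\circ f^+_{{\mathcal H}_f}$ as that projection, and the self-adjointness criterion for projections --- are both correctly established, so the proof is complete; your symmetric variant via $f^+_{{\mathcal H}_f}\circ f$ and ${\widetilde {\mathcal H}_f}^\perp \ne [\Ker f]^\perp$ works equally well.
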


\medskip

\begin{exam} If $f$ and ${\mathcal H}_f = \{H_1, H_2\}$ are as in the proof of Lemma \ref{lem:counter-ex}, a computation shows that 

$$f^+_{{\mathcal H}_f} (v_i) = \left \{ \begin{aligned} \quad \frac12 v_1 +  \frac12 v_2 \qquad \quad &\text{ if } \quad i = 1 \\ \quad v_4 -  \frac12 v_1 -  \frac12  v_2 \qquad \quad &\text{ if } \quad i = 3 \\ \quad 0 \qquad \quad \quad &\text{ if } \quad i = 2,4\end{aligned} \right .$$\noindent and $$f^\dagger (v_i) = \left \{ \begin{aligned} \quad \frac16 v_1 + \frac16 v_2 - \frac13 v_3 \quad \qquad \qquad \quad &\text{ if } \quad i = 1 \\ \quad  -\frac1{12} v_1 - \frac1{12} v_2 + \frac16 v_3 + \frac12 v_4 \qquad \quad &\text{ if } \quad i = 2 \\ \quad  -\frac1{12} v_1 - \frac1{12} v_2 + \frac16 v_3 + \frac12 v_4 \qquad \quad &\text{ if } \quad i = 3 \\ \quad 0  \qquad \qquad  \quad \qquad \quad \quad &\text{ if } \quad i = 4\end{aligned} \right . \, .$$

Readers can easily check that $f^+_{{\mathcal H}_f}$ is a reflexive generalized inverse of $f$ (Definition \ref{df:gen-lin-map}), although it is clear that $f^+_{{\mathcal H}_f} \ne f^\dagger$.

\end{exam}

\medskip

\begin{lem} \label{lem:equ-imf-345} With the above notation, we have that ${\mathcal H}_f^\perp = [\Im f]^\perp$ if and only if $$[\Im f_i]_i^\perp \subseteq [\sum_{j\ne i} \Im f_j]^\perp \text{ for every }i \in \{1, \dots, n\}\, .$$

\end{lem}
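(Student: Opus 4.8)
The plan is to deduce the equivalence from Lemma \ref{l:or-dcomp} applied to $U = \Im f$, and then to upgrade an inclusion to an equality by a dimension count.

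First I would apply Lemma \ref{l:or-dcomp} with the subspace $U = \Im f$ and the subspaces $U_i = \Im f_i$. This is legitimate because $\Im f_i \subseteq H_i$ for each $i$ and, by Lemma \ref{lem:prop-dar-finite}(1), $\Im f = \Im f_1 \oplus \dots \oplus \Im f_n$, so the hypotheses of Lemma \ref{l:or-dcomp} are met. Its conclusion reads exactly
$$\mathcal{H}_f^\perp = [\Im f_1]_1^\perp \oplus \dots \oplus [\Im f_n]_n^\perp \subseteq [\Im f]^\perp \Longleftrightarrow [\Im f_i]_i^\perp \subseteq [\textstyle\sum_{j\ne i}\Im f_j]^\perp \text{ for all } i\, .$$
Thus the right-hand condition in the statement is precisely equivalent to the inclusion $\mathcal{H}_f^\perp \subseteq [\Im f]^\perp$, and it only remains to see that this inclusion is in turn equivalent to the equality $\mathcal{H}_f^\perp = [\Im f]^\perp$.

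One direction of that last equivalence is trivial. For the converse I would use a dimension argument: by Lemma \ref{lem:prop-dar-finite}(2) we have $E = \Im f \oplus \mathcal{H}_f^\perp$, while the orthogonal decomposition of the inner product space gives $E = \Im f \oplus [\Im f]^\perp$; hence
$$\dim_k \mathcal{H}_f^\perp = \dim_k E - \dim_k \Im f = \dim_k [\Im f]^\perp\, .$$
Since $E$ is finite-dimensional, an inclusion between two subspaces of equal dimension is an equality, so $\mathcal{H}_f^\perp \subseteq [\Im f]^\perp$ forces $\mathcal{H}_f^\perp = [\Im f]^\perp$. Chaining the two equivalences then yields the claim.

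The proof is essentially bookkeeping built on top of Lemma \ref{l:or-dcomp}, and I do not expect a genuine obstacle. The only real content is the dimension count that promotes the inclusion to an equality, and the one point to verify carefully is that the hypotheses of Lemma \ref{l:or-dcomp} really hold for the choice $U = \Im f$, $U_i = \Im f_i$, which is exactly what Lemma \ref{lem:prop-dar-finite}(1) guarantees.
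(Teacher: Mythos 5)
Your proof is correct, and its first (and main) step coincides with the paper's: both apply Lemma \ref{l:or-dcomp} with $U = \Im f$, $U_i = \Im f_i$ (justified by Lemma \ref{lem:prop-dar-finite}(1)) to identify the stated condition with the inclusion ${\mathcal H}_f^\perp \subseteq [\Im f]^\perp$. The difference lies in how that inclusion is promoted to an equality. The paper does it with an intersection argument: since $E = \Im f \oplus {\mathcal H}_f^\perp$ and $\Im f \cap [\Im f]^\perp = \{0\}$, any $w \in [\Im f]^\perp$ written as $w = u + h$ with $u \in \Im f$ and $h \in {\mathcal H}_f^\perp \subseteq [\Im f]^\perp$ has $u = w - h \in \Im f \cap [\Im f]^\perp = \{0\}$, so $[\Im f]^\perp \subseteq {\mathcal H}_f^\perp$. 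You instead count dimensions, invoking the orthogonal decomposition $E = \Im f \oplus [\Im f]^\perp$. Both are valid in this subsection, where $E$ is finite-dimensional, but note what each buys. Your dimension count is perhaps the quickest finish in finite dimensions, yet it is tied to them twice over: it needs $\dim_k E < \infty$, and it needs $E = \Im f \oplus [\Im f]^\perp$, which in general fails for infinite-dimensional inner product spaces (that failure is precisely what Definition \ref{def:inf-Mo-Pen-adm} is designed to exclude). The paper's intersection argument uses neither, which is why it transfers verbatim to the infinite-dimensional analogue, Lemma \ref{lem:equ-imf-inf}, whose proof the paper simply cites as the generalization of Lemma \ref{lem:equ-imf-345}. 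So your route is sound for the statement as given, but it would not support the later generalization, whereas the paper's does.
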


\begin{proof} Considering that $E = \Im f \oplus {\mathcal H}_f^\perp$ and  $[\Im f]^\perp \cap \Im f = \{0\}$, the statement is deduced bearing in mind that this condition is equivalent to ${\mathcal H}_f^\perp \subseteq [\Im f]^\perp$ (Lemma \ref{l:or-dcomp}).

\end{proof}

Similarly, one can prove that 

\begin{lem} \label{lem:equ-kerf-345} We have that ${\widetilde {\mathcal H}_f}^\perp = [\Ker f]^\perp$ if and only if $$[\Ker f_i]_i^\perp \subseteq [\sum_{j\ne i} \Ker f_j]^\perp \text{ for every }i \in \{1, \dots, n\}\, .$$

\end{lem}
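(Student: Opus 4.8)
The plan is to mirror the proof of Lemma \ref{lem:equ-imf-345}, replacing the image of $f$ by its kernel throughout. The two structural facts I would lean on are both already available: by Lemma \ref{lem:prop-dar-finite}(3) we have the direct-sum decomposition $\Ker f = \Ker f_1 \oplus \dots \oplus \Ker f_n$ with each $\Ker f_i \subseteq H_i$, and by Lemma \ref{lem:prop-dar-finite}(4) we have $E = \Ker f \oplus {\widetilde {\mathcal H}_f}^\perp$, where ${\widetilde {\mathcal H}_f}^\perp = [\Ker f_1]_1^\perp \oplus \dots \oplus [\Ker f_n]_n^\perp$. Alongside these I would use the elementary fact $\Ker f \cap [\Ker f]^\perp = \{0\}$, which holds because $g$ is positive definite.

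First I would apply Lemma \ref{l:or-dcomp} with the choice $U = \Ker f$ and $U_i = \Ker f_i$ (legitimate since $\Ker f_i \subseteq H_i$ and the $\Ker f_i$ decompose $\Ker f$). That lemma tells us that the displayed hypothesis $[\Ker f_i]_i^\perp \subseteq [\sum_{j\ne i} \Ker f_j]^\perp$ for all $i \in \{1,\dots,n\}$ is equivalent to the single inclusion ${\widetilde {\mathcal H}_f}^\perp \subseteq [\Ker f]^\perp$. This is the crux of translating the combinatorial ``componentwise orthogonality'' condition into one clean subspace inclusion, and it is exactly the role Lemma \ref{l:or-dcomp} played in the image case.

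It then remains to see that the one-sided inclusion ${\widetilde {\mathcal H}_f}^\perp \subseteq [\Ker f]^\perp$ already forces equality. The forward direction is trivial. For the converse I would argue as in Lemma \ref{lem:equ-imf-345}: take any $v \in [\Ker f]^\perp$ and, using $E = \Ker f \oplus {\widetilde {\mathcal H}_f}^\perp$, write $v = a + b$ with $a \in \Ker f$ and $b \in {\widetilde {\mathcal H}_f}^\perp \subseteq [\Ker f]^\perp$; then $a = v - b$ lies in $[\Ker f]^\perp$, while $a \in \Ker f$, so $a \in \Ker f \cap [\Ker f]^\perp = \{0\}$, giving $v = b \in {\widetilde {\mathcal H}_f}^\perp$. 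Hence $[\Ker f]^\perp \subseteq {\widetilde {\mathcal H}_f}^\perp$, and combined with the assumed inclusion this yields ${\widetilde {\mathcal H}_f}^\perp = [\Ker f]^\perp$.

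I do not expect a genuine obstacle here, since everything takes place in the finite-dimensional space $(E,g)$ and each ingredient is already in hand. The only point requiring a little care is the upgrade from inclusion to equality: one must invoke the direct sum $E = \Ker f \oplus {\widetilde {\mathcal H}_f}^\perp$ together with the positive-definiteness of $g$ (through $\Ker f \cap [\Ker f]^\perp = \{0\}$), rather than, say, a bare dimension count, so that the argument parallels the image case verbatim. With that observation the statement follows immediately from Lemma \ref{l:or-dcomp}.
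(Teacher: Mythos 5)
Your proof is correct and is essentially the paper's own argument: the paper proves the kernel case ``similarly'' to Lemma \ref{lem:equ-imf-345}, whose proof is exactly your combination of Lemma \ref{l:or-dcomp} (applied to $U = \Ker f$, $U_i = \Ker f_i$) with the decomposition $E = \Ker f \oplus {\widetilde {\mathcal H}_f}^\perp$ and the fact $\Ker f \cap [\Ker f]^\perp = \{0\}$ to upgrade the inclusion to an equality. You have merely spelled out the inclusion-to-equality step that the paper leaves implicit.
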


\medskip

\begin{prop}  If  $f  \in \ed_{k} (E)$ and ${\mathcal H}_f = \{H_1, \dots, H_n\}$ is a family of subspaces of $E$ invariants for $f$ such that $E = H_1 \oplus \dots \oplus H_n$ and $H_i \subseteq [\sum_{j\ne i} H_j]^\perp$ for all $i \in \{1, \dots, n\}$, then $f^+_{{\mathcal H}_f} = f^\dagger$.
\end{prop}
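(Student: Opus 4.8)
The plan is to show that the hypothesis $H_i \subseteq [\sum_{j\ne i} H_j]^\perp$ is exactly the strong orthogonality condition needed to force both $\mathcal{H}_f^\perp = [\Im f]^\perp$ and $\widetilde{\mathcal H}_f^\perp = [\Ker f]^\perp$, and then invoke the explicit formula \eqref{def:fini-reflx-345} together with the definition \eqref{def:Moore-Penrose} of $f^\dagger$. Once those two subspace identities are established, the two piecewise formulas for $f^+_{\mathcal H_f}$ and $f^\dagger$ have identical domains of definition on each piece and identical values, so the maps coincide.

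First I would verify the two orthogonality identities using Lemma \ref{lem:equ-imf-345} and Lemma \ref{lem:equ-kerf-345}. By Lemma \ref{lem:equ-imf-345}, $\mathcal H_f^\perp = [\Im f]^\perp$ holds precisely when $[\Im f_i]_i^\perp \subseteq [\sum_{j\ne i} \Im f_j]^\perp$ for each $i$; similarly Lemma \ref{lem:equ-kerf-345} reduces $\widetilde{\mathcal H}_f^\perp = [\Ker f]^\perp$ to the analogous condition with $\Ker$ in place of $\Im$. So it suffices to derive both of these from the single hypothesis on the $H_i$. The key observation is that $[\Im f_i]_i^\perp \subseteq H_i$ and $[\Ker f_i]_i^\perp \subseteq H_i$, while $\sum_{j\ne i} \Im f_j \subseteq \sum_{j\ne i} H_j$ and $\sum_{j\ne i}\Ker f_j \subseteq \sum_{j\ne i} H_j$. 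Since $H_i \subseteq [\sum_{j\ne i} H_j]^\perp$ means every vector of $H_i$ is orthogonal to every vector of $\sum_{j\ne i} H_j$, any element of $[\Im f_i]_i^\perp$ (being in $H_i$) is orthogonal to all of $\sum_{j\ne i} H_j$, hence in particular to the subspace $\sum_{j\ne i}\Im f_j$; this gives $[\Im f_i]_i^\perp \subseteq [\sum_{j\ne i}\Im f_j]^\perp$, and the same argument yields the kernel inclusion.

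Having both identities, I would combine them with part (5) of Lemma \ref{lem:prop-dar-finite}. That part states $f$ restricts to an isomorphism $\widetilde{\mathcal H}_f^\perp \xrightarrow{\sim} \Im f$ and gives the formula \eqref{def:fini-reflx-345} defining $f^+_{\mathcal H_f}$ via $(f_{\vert_{\widetilde{\mathcal H}_f^\perp}})^{-1}$ on $\Im f$ and $0$ on $\mathcal H_f^\perp$. Because $\widetilde{\mathcal H}_f^\perp = [\Ker f]^\perp$, the inverse map $(f_{\vert_{\widetilde{\mathcal H}_f^\perp}})^{-1}$ is literally the same as $(f_{\vert_{[\Ker f]^\perp}})^{-1}$ appearing in the definition \eqref{def:Moore-Penrose} of $f^\dagger$; and because $\mathcal H_f^\perp = [\Im f]^\perp$, the subspace on which $f^+_{\mathcal H_f}$ vanishes coincides with the subspace $[\Im f]^\perp$ on which $f^\dagger$ vanishes. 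Since $E = \Im f \oplus [\Im f]^\perp$, the two maps agree on both summands, hence on all of $E$.

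I expect the only real content to be the orthogonality bookkeeping in the first step, and the potential pitfall is direction of inclusion: the hypothesis is phrased for the ambient subspaces $H_i$, whereas the two Lemmas demand conditions on the images and kernels, so I must be careful that the relevant inclusions $[\Im f_i]_i^\perp \subseteq H_i$ and $\sum_{j\ne i}\Im f_j \subseteq \sum_{j\ne i} H_j$ point the right way to transfer orthogonality. They do, since orthogonality to a larger set is a stronger condition and $H_i$ is orthogonal to the larger set $\sum_{j\ne i} H_j$, which then passes to the smaller subset $\sum_{j\ne i}\Im f_j$. Everything else is a direct appeal to the already-proven lemmas and to the defining formulas, so no further computation is needed.
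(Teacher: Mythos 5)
Your proposal is correct and follows exactly the paper's route: the paper's (very terse) proof likewise deduces the conditions of Lemma \ref{lem:equ-imf-345} and Lemma \ref{lem:equ-kerf-345} from the hypothesis $H_i \subseteq [\sum_{j\ne i} H_j]^\perp$ and then concludes. Your write-up simply makes explicit the two details the paper leaves implicit, namely the orthogonality transfer via $[\Im f_i]_i^\perp \subseteq H_i$ and $\sum_{j\ne i}\Im f_j \subseteq \sum_{j\ne i} H_j$ (and the kernel analogue), and the final comparison of the piecewise formulas \eqref{def:fini-reflx-345} and \eqref{def:Moore-Penrose}.
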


\begin{proof} If $H_i \subseteq [\sum_{j\ne i} H_j]^\perp$ for all $i \in \{1, \dots, n\}$, then the conditions of Lemma \ref{lem:equ-imf-345} and Lemma \ref{lem:equ-kerf-345} hold. Hence, the claim is deduced.

\end{proof}

\medskip

\subsection{Moore-Penrose inverse of linear maps on arbitrary inner product Spaces}

\smallskip

We shall now generalize the notion of Moore-Penrose inverse to some endomorphisms  of arbitrary vector spaces, in particular some infinite-dimensional vector spaces.

Henceforth $(V,g)$ and $(W,{\bar g})$  will be inner product vector spaces over $k$, with $k = \mathbb C$ or $k = \mathbb R$.

\begin{defn} \label{def:inf-Mo-Pen-adm} Given a linear map $f \colon V \to W$, we say that $f$ is admissible for the Moore-Penrose inverse when  $V = \Ker f \oplus [\Ker f]^\perp$ and $W = \Im f \oplus [\Im f]^\perp$. 

\end{defn}

\begin{rem} It is known that there exist infinite-dimensional vector spaces $V$ and vector subspaces $U\subset V$ such that $V \ne U \oplus U^\perp$. In this case, if $V = U\oplus W$, it is clear that the linear map $f_{_U} \in \ed_k (V)$ defined as $$f_{_U} (v) = \left \{\begin{aligned} 0 \quad &\text{ if } \quad v \in U \\ v \quad &\text{ if } \quad v \in W \end{aligned} \right .$$\noindent is not admissible for the Moore-Penrose inverse.
\end{rem}

\begin{thm}[Existence and uniqueness of Moore-Penrose inverse] \label{th:MThTN} If  $(V,g)$ and $(W,{\bar g})$  are inner product spaces over $k$, then $f \colon V \to W$ is a linear map admissible for the Moore-Penrose inverse if and only if there exists a unique linear map $f^\dagger \colon W \to V$ such that:

\begin{enumerate}

\item $f^\dagger$ is a reflexive generalized inverse of $f$;

\item $f^\dagger \circ f$ and $f \circ f^\dagger$ are self-adjoint, that is:

\begin{itemize}

\item $g ([f^\dagger \circ f] (v), v') = g (v, [f^\dagger \circ f] (v')$;

\item ${\bar g} ([f \circ f^\dagger] (w), w') ={\bar  g} (w, [f \circ f^\dagger] (w')$;

\end{itemize}

\end{enumerate}

for all $v, v' \in V$ and $w,w' \in W$. The operator $f^\dagger$ is named the Moore-Penrose inverse of $f$.

\end{thm}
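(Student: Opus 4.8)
The plan is to prove the two implications separately: building the candidate inverse by hand for the forward direction, and then reading the orthogonal splittings off from self-adjoint idempotents for the converse.

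For the forward direction, assume $f$ is admissible, so $V = \Ker f \oplus [\Ker f]^\perp$ and $W = \Im f \oplus [\Im f]^\perp$. Exactly as in the finite-dimensional definition \eqref{def:Moore-Penrose}, the restriction $f_{\vert_{[\Ker f]^\perp}} \colon [\Ker f]^\perp \to \Im f$ is an isomorphism: injective because $\Ker f \cap [\Ker f]^\perp = \{0\}$, and onto $\Im f$ because each $v\in V$ differs from its $[\Ker f]^\perp$-component by a vector of $\Ker f$. I would then set $f^\dagger(w) = (f_{\vert_{[\Ker f]^\perp}})^{-1}(w_1)$, where $w = w_1 + w_2$ with $w_1\in\Im f$ and $w_2\in[\Im f]^\perp$; this is linear since it is assembled from the projections of a direct sum. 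A short computation then identifies $f\circ f^\dagger$ as the projection of $W$ onto $\Im f$ along $[\Im f]^\perp$ and $f^\dagger\circ f$ as the projection of $V$ onto $[\Ker f]^\perp$ along $\Ker f$. The reflexive identities $f\circ f^\dagger\circ f = f$ and $f^\dagger\circ f\circ f^\dagger = f^\dagger$ are immediate from these descriptions, and self-adjointness of each composite reduces to orthogonality of the two summands: writing $w = w_1+w_2$ and $w' = w_1'+w_2'$ one gets ${\bar g}((f\circ f^\dagger)(w),w') = {\bar g}(w_1,w_1') = {\bar g}(w,(f\circ f^\dagger)(w'))$ since the cross terms vanish, and symmetrically for $f^\dagger\circ f$ with $g$. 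This establishes conditions (1) and (2).

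For uniqueness I would deliberately avoid the classical argument that manipulates the adjoint $f^*$, since in this setting $f^*$ need not exist; only the self-adjointness of the composites is available. If $h\colon W\to V$ also satisfies (1) and (2), then $h\circ f$ is a self-adjoint idempotent with $\Ker(h\circ f) = \Ker f$, and $f\circ h$ is a self-adjoint idempotent with $\Im(f\circ h) = \Im f$ (both coming from $f\circ h\circ f = f$). A self-adjoint idempotent is an orthogonal projection and is therefore determined by its kernel, equivalently by its image; hence $h\circ f = f^\dagger\circ f$ and $f\circ h = f\circ f^\dagger$. Then $h = h\circ f\circ h = (h\circ f)\circ h = (f^\dagger\circ f)\circ h = f^\dagger\circ(f\circ h) = f^\dagger\circ(f\circ f^\dagger) = f^\dagger$.

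For the converse, assume such an $f^\dagger$ exists and put $p = f^\dagger\circ f$ and $q = f\circ f^\dagger$; the reflexive identities make $p$ and $q$ idempotent and hypothesis (2) makes them self-adjoint. The heart of this direction is the elementary fact that a self-adjoint idempotent $p$ on an inner product space is an orthogonal projection: from $v = p(v) + (v - p(v))$ one obtains $V = \Im p \oplus \Ker p$, the summands are orthogonal by self-adjointness, and positive-definiteness forces $\Ker p\cap[\Ker p]^\perp = \{0\}$, whence $\Im p = [\Ker p]^\perp$. Since $\Ker p = \Ker f$ and $\Im q = \Im f$ (again from the reflexive identities), this yields $V = \Ker f\oplus[\Ker f]^\perp$ and $W = \Im f\oplus[\Im f]^\perp$, i.e.\ admissibility. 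I expect this converse step to be the main obstacle: one must extract the two orthogonal decompositions purely from the self-adjoint idempotency of $p$ and $q$, letting positive-definiteness of $g$ and $\bar g$ play the role that finite-dimensionality plays in the classical theory, rather than relying on an adjoint of $f$ that may fail to exist.
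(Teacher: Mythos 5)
Your proof is correct, and two of its three pieces coincide with the paper's own argument: the existence half uses the identical construction ($f^\dagger = (f_{\vert_{[\Ker f]^\perp}})^{-1}$ on $\Im f$, zero on $[\Im f]^\perp$, with the same verification of the four properties), and the converse extracts admissibility from exactly the facts you list --- the paper proves inline that $\Ker(f\circ f^\dagger) = [\Im f]^\perp$, $\Im(f\circ f^\dagger) = \Im f$, $\Ker(f^\dagger\circ f) = \Ker f$ and $\Im(f^\dagger\circ f)\subseteq [\Ker f]^\perp$, and then splits $V$ and $W$ along these projections, which is precisely your ``self-adjoint idempotent is an orthogonal projection'' lemma unpacked by hand. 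The genuine difference is the uniqueness step. The paper pins down a competitor $\widetilde f$ piecewise: self-adjointness plus positive-definiteness give $(f\circ\widetilde f)(w') = 0$ for $w'\in[\Im f]^\perp$ (hence $\widetilde f$ vanishes there, by reflexivity), while $(f\circ\widetilde f)_{\vert_{\Im f}} = \text{Id}$ together with $\Im \widetilde f\subseteq[\Ker f]^\perp$ forces $\widetilde f_{\vert_{\Im f}} = (f_{\vert_{[\Ker f]^\perp}})^{-1}$; admissibility then determines $\widetilde f$ on all of $W = \Im f\oplus[\Im f]^\perp$. You instead identify both composites of a competitor $h$ with those of the constructed $f^\dagger$ (two self-adjoint idempotents with equal kernel, respectively equal image, coincide) and close with Penrose's algebraic chain $h = h\circ f\circ h = f^\dagger\circ f\circ h = f^\dagger\circ f\circ f^\dagger = f^\dagger$. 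Your route is more modular --- one lemma powers both the uniqueness and the converse, and it is the classical matrix argument transplanted so that only self-adjointness of the composites is ever used --- whereas the paper's is more explicit, exhibiting the competitor summand by summand as the map it constructed. Both rest on the same underlying computations, and your caution about $f^*$ is well placed: the paper's proof likewise never invokes an adjoint of $f$ itself.
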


\begin{proof} If $f$ is admissible of the Moore-Penrose inverse (Definition \ref{def:inf-Mo-Pen-adm}), then the restriction $f_{\vert_{ [\Ker f]^\perp}}$ is an isomorphism between $ [\Ker f]^\perp$ and $\Im f$ and there exists an unique linear map satisfying that  
$$f^\dagger (w) = \left \{ \begin{aligned} (f_{\vert_{ [\Ker f]^\perp}})^{-1} (w) \quad &\text{ if } \quad w\in \Im f \\ 0 \quad \quad &\text{ if } \quad w\in [\Im f]^\perp \end{aligned} \right . \, .$$

We shall now check that $f^\dagger$ satisfies the conditions of the statement.

Firstly, since $$(f \circ f^\dagger) (w) = \left \{ \begin{aligned} w \quad &\text{ if } \quad w\in \Im f \\ 0 \quad  &\text{ if } \quad w\in [\Im f]^\perp \end{aligned} \right .$$\noindent and $(f^\dagger \circ f) (v) = v_1$ with $v = v_1 + v_2$ ($v_1 \in [\Ker f]^\perp$ and $v_2 \in \Ker f$), then it is clear that  $f^\dagger$ is a reflexive generalized inverse of $f$ because:

\begin{itemize}

\item $(f \circ f^\dagger \circ f) (v) = f(v)$;

\item $(f^\dagger \circ f \circ f^\dagger) (w) = f^\dagger (w)$.

\end{itemize}

Moreover,

$${\bar g}([f \circ f^\dagger] (w),w') = {\bar  g} (w, [f \circ f^\dagger] (w')) =  \left \{ \begin{aligned} {\bar g} (w,w') \quad &\text{ if } \quad w, w' \in \Im f \\ 0 \quad \quad  &\text{ if } \quad w\in [\Im f]^\perp \\ 0 \quad \quad  &\text{ if } \quad w'\in [\Im f]^\perp   \end{aligned} \right .$$

And,  if $v, v' \in V$ with $v = v_1 + v_2$,  $v' = v'_1 + v'_2$, $v_1, v'_1 \in [\Ker f]^\perp$ and $v_2, v'_2 \in \Ker f$, one has that $$g ([f^\dagger \circ f] (v), v') = g(v_1,v'_1) =  g (v, [f^\dagger \circ f] (v')\, .$$\noindent Hence, we conclude that $f^\dagger$ satisfies the conditions of the Theorem.

For proving the uniqueness of the Moore-Penrose inverse of $f$, let us consider a linear map ${\widetilde f}  \colon W \to V$ such that

\begin{enumerate}

\item ${\widetilde f}$ is a reflexive generalized inverse of $f$;

\item $g ([{\widetilde f} \circ f] (v), v') = g (v, [{\widetilde f} \circ f] (v')$;

\item ${\bar g} ([f \circ {\widetilde f}] (w), w') ={\bar  g} (w, [f \circ {\widetilde f}] (w')$;

\end{enumerate}

for all $v, v' \in V$ and $w,w' \in W$.

A direct consequence of (1)  is that $({\widetilde f} \circ f)^2 = {\widetilde f} \circ f$. Hence ${\widetilde f} \circ f$ is a projection and, since $$\text{Im } f = \text{Im } (f \circ {\widetilde f} \circ f) \subseteq \text{Im }  (f \circ {\widetilde f}) \subseteq \text{Im } f\, ,$$\noindent then $\text{Im }  (f \circ {\widetilde f}) = \text{Im } f$.

Accordingly, given $w \in Im f$, there exists $\bar w \in W$ such that $(f \circ {\widetilde f}) ({\bar w})$, and then $$(f \circ {\widetilde f}) (w) = (f \circ {\widetilde f})^2 (\bar w) = (f \circ {\widetilde f}) (\bar w) = w\, .$$

Furthermore, if $w' \in [\text{Im } f]^\perp$, we have that $$0 = {\bar g} ([f \circ {\widetilde f}]^2 (w'), w') = {\bar g} ([f \circ {\widetilde f}] (w'), [f \circ {\widetilde f}] (w')) \Longrightarrow [f \circ {\widetilde f}] (w') = 0\, .$$  

Thus,  $$(f \circ {\widetilde f}) (w) = \left \{ \begin{aligned} w \quad &\text{ if } \quad w\in \Im f \\ 0 \quad  &\text{ if } \quad w\in [\Im f]^\perp \end{aligned} \right .$$\noindent and, in particular, ${\widetilde f} (w') = 0$ when $w' \in [\text{Im } f]^\perp$.

In line with the above arguments, one has that $(f \circ {\widetilde f})^2 = f \circ {\widetilde f}$ and  $\text{Im }  ({\widetilde f} \circ f) = \text{Im } {\widetilde f}$.

Now, if $v\in \text{Im } {\widetilde f}$, $v = [{\widetilde f} \circ f] (\bar v)$ and $v' \in \Ker f$, then $$g(v,v') = g ( [{\widetilde f} \circ f] (\bar v), v') = g ({\bar v},0) = 0\, ,$$\noindent and we deduce that $\text{Im } {\widetilde f} \subseteq [\Ker f]^\perp$.

Finally, since $f_{\vert_{ [\Ker f]^\perp}}\colon [\Ker f]^\perp \overset {\sim} \longrightarrow \Im f$ and  $(f \circ {\widetilde f})_{\vert_{\text{Im } f}} = \text{Id}_{\vert_{\text{Im } f}}$, then $$({\widetilde f})_{\vert_{\text{Im } f}} = (f_{\vert_{ [\Ker f]^\perp}})^{-1}\Longrightarrow {\widetilde f} = f^\dagger\, .$$

Conversely, let us assume that there exists the Moore-Penrose inverse \linebreak $f^\dagger \colon W \to V$ of a linear map $f\colon V \to W$. Based on the same arguments as above one immediately has that:

\begin{itemize}

\item $f\circ f^\dagger$ and $f^\dagger \circ f$ are projections;

\item $\text{Im } (f\circ f^\dagger) = \text{Im } f$;

\item $[\text{Im } f]^\perp \subseteq \Ker (f\circ f^\dagger)$;

\item $\text{Im } (f^\dagger \circ f)  \subseteq [\Ker f]^\perp$.

\end{itemize}

Moreover, if $w\notin [\text{Im } f]^\perp$ there exists $\bar w \in W$ such that $$0 \ne  {\bar g} ([f \circ f^\dagger]^2 (\bar w), w) = {\bar g} ([f \circ f^\dagger] (\bar w), [f \circ f^\dagger] (w))\, ,$$\noindent from where we deduce that $w\notin \Ker (f\circ f^\dagger)$ and $[\text{Im } f]^\perp = \Ker (f\circ f^\dagger)$.

On the other hand it is clear that $\Ker f \subseteq \Ker (f^\dagger \circ f)$ and, if $v\in V$ with $f(v) \ne 0$ then $v \notin \Ker (f^\dagger \circ f)$, because $$f(v) = (f \circ f^\dagger \circ f) (v) \ne 0\, .$$

Hence, $\Ker f = \Ker (f^\dagger \circ f)$ and, bearing in mind that if $g\in \ed_k (V)$ is a projection then $V = \Ker g \oplus \text{Im } g$, one concludes that $$V = \Ker (f^\dagger \circ f) \oplus \text{Im } (f^\dagger \circ f) = \Ker f \oplus [\Ker f]^\perp$$\noindent and $$W =  \Ker (f \circ f^\dagger) \oplus \text{Im } (f \circ f^\dagger) =  \text{Im } f \oplus [\text{Im } f]^\perp\, .$$ Accordingly, $f$ is admissible for the Moore-Penrose inverse and the statement is deduced.

\end{proof}

\medskip

Since each isomorphism $g\colon V \to W$ is admissible for the Moore-Penrose inverse, a direct consequence of  Theorem \ref{th:MThTN} is that $g^\dagger = g^{-1}$, where $g^{-1}$ is the inverse map of $g$.

\smallskip

\begin{cor}  If  $(V,g)$ and $(W,{\bar g})$  are inner product spaces over $k$ and \linebreak  $f \colon V \to W$ is a linear map admissible for the Moore-Penrose inverse, then $f^\dagger$ is also admissible for the Moore-Penrose inverse and $(f^\dagger)^\dagger = f$.
\end{cor}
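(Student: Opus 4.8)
The plan is to extract the kernel and image of $f^\dagger$ explicitly from its defining formula, deduce admissibility of $f^\dagger$ from these, and then obtain $(f^\dagger)^\dagger = f$ via the uniqueness clause of Theorem \ref{th:MThTN}.

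First I would read off the kernel and image of $f^\dagger$. By the defining formula, $f^\dagger$ vanishes on $[\Im f]^\perp$ and restricts to the isomorphism $(f_{\vert_{[\Ker f]^\perp}})^{-1}$ on $\Im f$, whose image is exactly $[\Ker f]^\perp$. Since $W = \Im f \oplus [\Im f]^\perp$, this gives at once $\Ker f^\dagger = [\Im f]^\perp$ and $\Im f^\dagger = [\Ker f]^\perp$.

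Second, to establish that $f^\dagger$ is admissible I must produce the decompositions $W = \Ker f^\dagger \oplus [\Ker f^\dagger]^\perp$ and $V = \Im f^\dagger \oplus [\Im f^\dagger]^\perp$. The key auxiliary fact, and the point where the infinite-dimensional subtlety genuinely enters, is that whenever an inner product space $X$ splits as $X = U \oplus U^\perp$ one has $(U^\perp)^\perp = U$ (in general only $U \subseteq (U^\perp)^\perp$ is automatic). I would prove this by taking $x \in (U^\perp)^\perp$, writing $x = u + u'$ with $u \in U$ and $u' \in U^\perp$, and computing $0 = g(x,u') = g(u',u')$, which forces $u' = 0$, so $x = u \in U$. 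Applying this with $U = \Im f$ inside $(W,\bar g)$ and with $U = [\Ker f]^\perp$ inside $(V,g)$ yields $[\Ker f^\dagger]^\perp = ([\Im f]^\perp)^\perp = \Im f$ and $[\Im f^\dagger]^\perp = ([\Ker f]^\perp)^\perp = \Ker f$. The two required decompositions then reduce, respectively, to $W = [\Im f]^\perp \oplus \Im f$ and $V = [\Ker f]^\perp \oplus \Ker f$, which are precisely the decompositions that admissibility of $f$ already provides.

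Finally, for $(f^\dagger)^\dagger = f$ I would invoke uniqueness in Theorem \ref{th:MThTN}, now applicable since $f^\dagger$ has just been shown admissible. The four defining conditions are symmetric in a map and its candidate inverse: the relations $f\circ f^\dagger \circ f = f$ and $f^\dagger \circ f \circ f^\dagger = f^\dagger$ say equally that $f$ is a reflexive generalized inverse of $f^\dagger$, and the self-adjointness of $f\circ f^\dagger$ and $f^\dagger \circ f$ is exactly what is demanded of the composites $f^\dagger \circ (f^\dagger)^\dagger$ and $(f^\dagger)^\dagger \circ f^\dagger$ when the candidate for $(f^\dagger)^\dagger$ is $f$. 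Thus $f$ satisfies all four conditions characterizing the Moore-Penrose inverse of $f^\dagger$, and uniqueness forces $(f^\dagger)^\dagger = f$. (Alternatively, one could verify the identity directly: on $\Im f^\dagger = [\Ker f]^\perp$ the map $(f^\dagger)^\dagger$ equals $(f^\dagger_{\vert_{\Im f}})^{-1} = f_{\vert_{[\Ker f]^\perp}}$, and on $[\Im f^\dagger]^\perp = \Ker f$ it is $0$, so it agrees with $f$ on both summands.) The main obstacle is the orthogonality lemma $(U^\perp)^\perp = U$ above, which is false for general subspaces in infinite dimensions and must be justified using the hypothesis that $U$ admits an orthogonal complement summing to the whole space.
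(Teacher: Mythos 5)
Your proof is correct and follows essentially the same route as the paper's: identify $\Ker f^\dagger = [\Im f]^\perp$, $\Im f^\dagger = [\Ker f]^\perp$ together with their orthogonal complements, and then invoke Theorem \ref{th:MThTN}. The paper merely lists these four identities without justification, so your double-orthogonal-complement lemma (that $(U^\perp)^\perp = U$ whenever $X = U \oplus U^\perp$) is precisely the detail needed to back the two identities the paper leaves implicit, making your write-up a fleshed-out version of the same argument.
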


\begin{proof} This statement is deduced from Theorem \ref{th:MThTN} bearing in mind that:

\begin{itemize}

\item $\text{Im } f^\dagger = [\Ker f]^\perp$;

\item $[\text{Im } f^\dagger]^\perp = \Ker f$;

\item $\Ker f^\dagger = [\text{Im } f]^\perp$;

\item  $[\Ker f^\dagger]^\perp = \text{Im } f$.

\end{itemize}

\end{proof}

\smallskip

Moreover, if $f \colon V \to W$ is a linear map admissible for the Moore-Penrose inverse and $P_{[\ker f]^\perp}$ and $P_{\text{Im } f}$ are the projections induced by the decompositions $V = \Ker f \oplus [\Ker f]^\perp$ and $W = \text{Im } f \oplus [\text{Im } f]^\perp$, respectively, we obtain from the arguments of the proof of Theorem \ref{th:MThTN} that

\begin{cor}  If  $(V,g)$ and $(W,{\bar g})$  are inner product spaces over $k$ and \linebreak $f \colon V \to W$ is a linear map admissible for the Moore-Penrose inverse, then:

\begin{itemize}

\item $f^\dagger \circ f = P_{[\ker f]^\perp}$;

\item $f \circ f^\dagger = P_{\text{Im } f}$.

\end{itemize}

\end{cor}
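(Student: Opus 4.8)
The plan is to obtain both identities directly by reading off the explicit action of the two composites $f \circ f^\dagger$ and $f^\dagger \circ f$ that was already computed in the course of proving Theorem \ref{th:MThTN}, and then matching each with the defining action of the corresponding projection. No genuinely new computation is required; the only point to verify is that the piecewise formulas glue together by linearity over the whole space, which is exactly what the admissibility decompositions guarantee.

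For the second identity I would start from the formula established in the proof of Theorem \ref{th:MThTN}, namely that $(f \circ f^\dagger)(w) = w$ for $w \in \Im f$ and $(f \circ f^\dagger)(w) = 0$ for $w \in [\Im f]^\perp$. Since $f$ is admissible we have $W = \Im f \oplus [\Im f]^\perp$, so an arbitrary $w \in W$ decomposes uniquely as $w = w_1 + w_2$ with $w_1 \in \Im f$ and $w_2 \in [\Im f]^\perp$. Applying $f \circ f^\dagger$ and using its linearity gives $(f \circ f^\dagger)(w) = w_1$, which is precisely the value $P_{\text{Im } f}(w)$ of the projection onto $\Im f$ along $[\Im f]^\perp$ determined by that decomposition. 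Hence $f \circ f^\dagger = P_{\text{Im } f}$.

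For the first identity I would argue symmetrically. The proof of Theorem \ref{th:MThTN} shows that $(f^\dagger \circ f)(v) = v_1$ whenever $v = v_1 + v_2$ with $v_1 \in [\Ker f]^\perp$ and $v_2 \in \Ker f$; admissibility provides the decomposition $V = \Ker f \oplus [\Ker f]^\perp$ that makes this splitting unique. By definition this $v_1$ is exactly $P_{[\ker f]^\perp}(v)$, the projection onto $[\Ker f]^\perp$ along $\Ker f$, so $f^\dagger \circ f = P_{[\ker f]^\perp}$.

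I do not expect any real obstacle here: both equalities are consequences of facts already recorded in the proof of Theorem \ref{th:MThTN}. If anything needs care it is merely confirming that the single-block formulas genuinely determine linear maps on all of $W$ and $V$—but this is immediate, since in each case the two subspaces appearing in the formula span the entire space, have trivial intersection, and the prescribed action is linear on each summand, so the composites are forced to coincide with the stated projections.
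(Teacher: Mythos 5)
Your proof is correct and follows essentially the same route as the paper: the paper states this corollary as an immediate consequence of "the arguments of the proof of Theorem \ref{th:MThTN}'', and those arguments are exactly the piecewise formulas for $f\circ f^\dagger$ and $f^\dagger\circ f$ that you cite and then glue by linearity using the admissibility decompositions. Nothing is missing.
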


\medskip

\subsection{Computation of the Moore-Penrose inverse of Endomorphisms on Arbitrary inner product Spaces}

\smallskip

Similar to the finite-dimensional situation, given an inner product space $(V, g)$ over $k$, $f \in \ed_{k} (V)$ let us assume that there exists a family of $f$-invariant finite-dimensional subspaces, ${\mathcal H}_f = \{H_i\}_{i\in I}$, such that $$V = \underset {i\in I} \bigoplus H_i\, .$$ 

Note that this assumption is always satisfied when $f$ admits an annihilator polynomial. 

To simplify, fixing a family  ${\mathcal H}_f$, we shall denote $f_i = f_{\vert_{H_i}}$.

\begin{defn} \label{def:reflexive-inf-decomp} We shall  call reflexive generalized inverse of $f$ associated with the family ${\mathcal H}_f$ to the unique linear map $f_{{\mathcal H}_f}^+ \in \ed_{k} (V)$ such that $[f_{{\mathcal H}_f}^+]_{\vert_{H_i}} = f_i^\dagger$ for every $i\in I$.
\end{defn}

For each vector $v\in V$, if $v = v_{i_1} + \dots + v_{i_s}$ with $v_{i_j} \in H_{i_j}$, then $$f_{{\mathcal H}_f}^+ (v) =  f_{i_1}^\dagger (v_1) + \dots +  f_{i_s}^\dagger (v_s)\, .$$

If $f$ is admissible for the Moore-Penrose inverse, our purpose is to determine when $f_{{\mathcal H}_f}^+ = f^\dagger$. To do this, the generalization onto infinite-dimensional vector spaces of Lemma \ref{lem:prop-dar-finite} is:

\begin{lem} \label{l:inf-cond} We have that:

\begin{enumerate}

\item $\Im f = \underset {i\in I} \bigoplus \Im f_i$;

\item If ${\mathcal H}_f^\perp = \underset {i\in I} \bigoplus [\Im f_i]_i^\perp$, then $V = \Im f \oplus {\mathcal H}_f^\perp$;

\item $\Ker f = \underset {i\in I} \bigoplus \Ker f_i$;

\item If ${\widetilde {\mathcal H}_f}^\perp = \underset {i\in I} \bigoplus [\Ker f_i]_i^\perp$, then $V = \Ker f \oplus {\widetilde {\mathcal H}_f}^\perp$;

\item $f$ induces an isomorphism between ${\widetilde {\mathcal H}_f}^\perp$ and $\Im f$.

\end{enumerate}

\end{lem}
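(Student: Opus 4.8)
The plan is to prove the five assertions of Lemma \ref{l:inf-cond} as the direct generalization of Lemma \ref{lem:prop-dar-finite}, with the only genuinely new subtlety being that the finite direct sum is now an arbitrary (possibly infinite) direct sum $\bigoplus_{i\in I} H_i$. The saving grace is that every vector $v\in V$ lies in a finite subsum $H_{i_1}\oplus\dots\oplus H_{i_s}$, so each individual verification reduces to the finite-dimensional situation already handled. I would carry the items out in the same order as in the finite case, since (2) uses (1), (4) uses (3), and (5) uses (2) and (4).

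Let me start with (1). The inclusion $\bigoplus_{i\in I}\Im f_i\subseteq\Im f$ is immediate since each $H_i$ is $f$-invariant. For the reverse, given $e\in\Im f$ write $e=f(e')$ with $e'=e'_{i_1}+\dots+e'_{i_s}$, $e'_{i_j}\in H_{i_j}$; then $e=\sum_j f(e'_{i_j})$ with $f(e'_{i_j})\in H_{i_j}$, so $e\in\bigoplus_{i\in I}\Im f_i$. That the sum is direct follows from directness of $V=\bigoplus_{i\in I}H_i$ together with $\Im f_i\subseteq H_i$. Item (3) is symmetric: $\bigoplus_{i\in I}\Ker f_i\subseteq\Ker f$ is clear, and if $f(\bar e)=0$ with $\bar e=\bar e_{i_1}+\dots+\bar e_{i_s}$, then $\sum_j f(\bar e_{i_j})=0$ forces each $f(\bar e_{i_j})=0$ because the $f(\bar e_{i_j})$ lie in the independent subspaces $H_{i_j}$; hence $\bar e\in\bigoplus_{i\in I}\Ker f_i$.

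For (2) I would invoke the finite-dimensional orthogonal splitting $H_i=\Im f_i\oplus[\Im f_i]_i^\perp$, valid because each $H_i$ is finite-dimensional and hence admits orthogonal complements inside itself. Summing over $i\in I$ and regrouping gives
$$V=\underset{i\in I}\bigoplus H_i=\underset{i\in I}\bigoplus\big(\Im f_i\oplus[\Im f_i]_i^\perp\big)=\Big(\underset{i\in I}\bigoplus\Im f_i\Big)\oplus\Big(\underset{i\in I}\bigoplus[\Im f_i]_i^\perp\Big)=\Im f\oplus{\mathcal H}_f^\perp\,,$$
the last equality using (1) and the definition of ${\mathcal H}_f^\perp$. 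Item (4) is identical, replacing $\Im f_i$ by $\Ker f_i$ and using (3) in place of (1).

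Finally, (5) follows formally from (3) and (4): by (4) the restriction $f_{\vert_{\widetilde{\mathcal H}_f^\perp}}$ has kernel ${\widetilde{\mathcal H}_f}^\perp\cap\Ker f=\{0\}$ and its image is $f({\widetilde{\mathcal H}_f}^\perp)=f(V)=\Im f$, so it is an isomorphism onto $\Im f$. I expect the only point requiring care is the regrouping step in (2) and (4), that is, the identity $\bigoplus_{i\in I}(A_i\oplus B_i)=(\bigoplus_{i\in I}A_i)\oplus(\bigoplus_{i\in I}B_i)$ for the internal direct sum inside an arbitrary $I$; this is where one must be sure that the infinite direct-sum convention introduced at the start of Section \ref{s:mo-l} (every element is a finite sum) makes the rearrangement legitimate, but since each vector has finite support the verification reduces to the finite case and presents no real obstacle.
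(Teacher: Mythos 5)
Your proposal is correct and follows essentially the same route as the paper's own proof: the same reduction to finite supports for items (1) and (3), the same orthogonal splitting $H_i=\Im f_i\oplus[\Im f_i]_i^\perp$ and regrouping for (2) and (4), and the same deduction of (5) from the decomposition $V=\Ker f\oplus{\widetilde {\mathcal H}_f}^\perp$. The extra care you devote to the directness of the sums and to the legitimacy of the infinite regrouping is a welcome (if minor) amplification of what the paper leaves implicit.
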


\begin{proof} 

\quad

\begin{enumerate}

\item It is clear that  $\underset {i\in I} \bigoplus \Im f_i \subseteq \Im f$. Moreover, given $v \in \Im f$, if $v = f(v')$ with $v' = v'_{i_1} + \dots + v'_{i_s}$ and $v'_{i_j} \in H_{i_j}$ for all $j\in \{1, \dots, s\}$, then $$v = f(v'_{i_1}) + \dots + f(v'_{i_s}) \in  \underset {i\in I} \bigoplus \Im f_i \, ,$$\noindent because $f(v'_{i_j}) \in H_{i_j}$ for every $j\in \{1, \dots, s\}$.

\item  Since $H_i = \Im f_i \oplus [\Im f_i]_i^\perp$ for all $i\in I$, we have that $$\begin{aligned} V &= \underset {i\in I} \bigoplus H_i = \underset {i\in I} \bigoplus \big (\Im f_i \oplus [\Im f_i]_i^\perp \big ) = \\ &= \big (\underset {i\in I} \bigoplus \Im f_i \big ) \oplus \big (\underset {i\in I} \bigoplus [\Im f_i]_i^\perp \big ) = \Im f \oplus {\mathcal H}_f^\perp\, .\end{aligned}$$

\item It is immediate that  $\underset {i\in I} \bigoplus \Ker f_i \subseteq \Ker f$. Furthermore, if ${\bar v} \in \Ker f$ and $${\bar v} = {\bar v}_{i_1} + \dots + {\bar v}_{i_k}\quad \text{ with } \quad {\bar v}_{i_j} \in H_{i_j} \quad \text{ for every } j\in \{1, \dots, k\}\, ,$$ since $$ f({\bar v}_{i_1}) + \dots + f({\bar v}_{i_k}) = 0 \text{ and } H_{i_j} \cap [\sum_{r\ne j} H_{i_r}] = {0}\, ,$$\noindent we conclude that $f({\bar v}_{i_j}) = 0$ for all $j\in \{1, \dots, k\}$, from which it is deduced that $\Ker f \subseteq \underset {i\in I} \bigoplus \Ker f_i$.

\item Similar to (2).

\item It is a direct consequence of (2) and (4).

\end{enumerate}

\end{proof}

\medskip

It is now easy to prove that the generalization onto an arbitrary vector space $V$ of the Lemma \ref{l:or-dcomp} is the following:

\begin{lem} \label{l:or-dcomp-2} With the previous assumptions on $V$ and ${\mathcal H}_f = \{U_i\}_{i\in I}$, if $U\subset V$ is a subspace and $\{U_i\}_{i\in I}$ is a family of subspaces of $U$ such that $U = \underset {i\in I} \oplus U_i$ with $U_i \subseteq H_i$, then $$\underset {i\in I} \oplus [U_i]_i^\perp \subseteq U^\perp \Longleftrightarrow 
[U_i]_i^\perp \subset [\sum_{j\ne i} U_j]^\perp \text{ for all } i\in I\, .$$

\end{lem}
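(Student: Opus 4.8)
The plan is to imitate almost verbatim the proof of its finite-dimensional template, Lemma~\ref{l:or-dcomp}, the only genuinely new ingredient being that the elements of an (internal) direct sum $\bigoplus_{i\in I}$ have finite support, so that every inner-product computation collapses to a finite sum and no convergence issue ever arises. Throughout I read the subscripted symbol $[U_i]_i^\perp$ as the orthogonal complement of $U_i$ taken inside $H_i$ (as in the definition of $W_i^\perp$), while the unsubscripted $[\sum_{j\ne i}U_j]^\perp$ and $U^\perp$ denote orthogonal complements taken in the ambient space $V$.

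For the forward implication I would argue summand by summand. Assuming $\bigoplus_{i\in I}[U_i]_i^\perp\subseteq U^\perp$, fix $i\in I$ and an element $v_i\in[U_i]_i^\perp$; viewing $v_i$ as sitting in the $i$-th slot of the direct sum shows $v_i\in U^\perp$, hence $g(u,v_i)=0$ for every $u\in U=\bigoplus_j U_j$. Restricting to $u\in\sum_{j\ne i}U_j\subseteq U$ immediately gives $v_i\in[\sum_{j\ne i}U_j]^\perp$, which is the desired inclusion.

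The converse is where the work sits, though it remains routine. Take $v\in\bigoplus_{i\in I}[U_i]_i^\perp$ and $u\in U$; by finite support I may write $v=v_{i_1}+\dots+v_{i_s}$ and $u=u_{j_1}+\dots+u_{j_t}$ as finite sums with $v_{i_l}\in[U_{i_l}]_{i_l}^\perp$ and $u_{j_m}\in U_{j_m}$. Expanding $g(v,u)=\sum_{l,m}g(v_{i_l},u_{j_m})$ by (sesqui)linearity, I would then kill each term by a dichotomy: when $i_l=j_m$ the factor vanishes because $v_{i_l}$ lies in the complement of $U_{i_l}$ inside $H_{i_l}$ (using $g(v_{i_l},u_{j_m})=\overline{g(u_{j_m},v_{i_l})}=0$), and when $i_l\ne j_m$ it vanishes because the hypothesis $[U_{i_l}]_{i_l}^\perp\subseteq[\sum_{j\ne i_l}U_j]^\perp$ places $v_{i_l}$ orthogonal to $u_{j_m}\in\sum_{j\ne i_l}U_j$. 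Hence $g(v,u)=0$ for all such $u$, that is $v\in U^\perp$, giving $\bigoplus_{i\in I}[U_i]_i^\perp\subseteq U^\perp$.

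I do not expect any real obstacle: the only things to watch are the conjugation bookkeeping in the complex case (harmless, since $g(w,v)=0\iff g(v,w)=0$) and the care needed to confirm that finite support legitimises treating the infinite direct sum exactly as the finite one in Lemma~\ref{l:or-dcomp}.
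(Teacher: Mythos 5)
Your proof is correct and coincides with what the paper intends: the paper states this lemma without a separate proof, presenting it as the easy generalization of Lemma~\ref{l:or-dcomp}, and your argument is precisely that finite-dimensional proof carried over, with the finite-support property of elements of $\underset{i\in I}\oplus [U_i]_i^\perp$ and of $U = \underset{i\in I}\oplus U_i$ supplying the only genuinely new ingredient (and the conjugate-symmetry remark handling the complex case). There is nothing to object to.
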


\medskip

Accordingly  we have that:

\begin{lem} \label{lem:equ-imf-inf} If $(V, g)$ is an inner product vector space over $k$, $f \in \ed_{k} (V)$, and ${\mathcal H}_f = \{H_i\}_{i\in I}$ with $V = \underset {i\in I} \bigoplus H_i$ and each $H_i$ is $f$-invariant, then ${\mathcal H}_f^\perp = [\Im f]^\perp$ if and only if $$[\Im f_i]_i^\perp \subseteq [\sum_{j\ne i} \Im f_j]^\perp \text{ for every }i \in I\, .$$

\end{lem}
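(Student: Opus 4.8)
The plan is to reduce the asserted biconditional to a single inclusion by means of Lemma \ref{l:or-dcomp-2}, and then to promote that inclusion to an equality using the decomposition of Lemma \ref{l:inf-cond}(2) together with the positive definiteness of $g$. This mirrors exactly the finite-dimensional argument of Lemma \ref{lem:equ-imf-345}, now supported by the infinite-dimensional versions of the auxiliary lemmas.

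First I would invoke Lemma \ref{l:or-dcomp-2} with $U = \Im f$ and $U_i = \Im f_i$. Its hypotheses are met: Lemma \ref{l:inf-cond}(1) gives $\Im f = \underset{i\in I}\bigoplus \Im f_i$, and each $\Im f_i \subseteq H_i$ because $H_i$ is $f$-invariant, so that $[\Im f_i]_i^\perp$ is well defined. Since ${\mathcal H}_f^\perp = \underset{i\in I}\bigoplus [\Im f_i]_i^\perp$, Lemma \ref{l:or-dcomp-2} shows that the right-hand condition $[\Im f_i]_i^\perp \subseteq [\sum_{j\ne i}\Im f_j]^\perp$ for all $i\in I$ is equivalent to ${\mathcal H}_f^\perp \subseteq [\Im f]^\perp$. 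Hence it suffices to prove that ${\mathcal H}_f^\perp \subseteq [\Im f]^\perp$ if and only if ${\mathcal H}_f^\perp = [\Im f]^\perp$.

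The nontrivial implication is the converse. Assuming ${\mathcal H}_f^\perp \subseteq [\Im f]^\perp$, I would establish the reverse inclusion $[\Im f]^\perp \subseteq {\mathcal H}_f^\perp$ as follows. Given $w \in [\Im f]^\perp$, the decomposition $V = \Im f \oplus {\mathcal H}_f^\perp$ from Lemma \ref{l:inf-cond}(2) allows me to write $w = u + h$ with $u \in \Im f$ and $h \in {\mathcal H}_f^\perp$. Since $h \in {\mathcal H}_f^\perp \subseteq [\Im f]^\perp$ and $[\Im f]^\perp$ is a subspace, the element $u = w - h$ also lies in $[\Im f]^\perp$; but $u \in \Im f$ as well, and positive definiteness of $g$ forces $\Im f \cap [\Im f]^\perp = \{0\}$, so $u = 0$ and $w = h \in {\mathcal H}_f^\perp$.

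I do not expect any serious obstacle. The one point deserving attention is that, unlike the finite-dimensional setting of Lemma \ref{lem:equ-imf-345}, I may not assume $V = \Im f \oplus [\Im f]^\perp$ here; the argument is instead built entirely on the always-valid decomposition $V = \Im f \oplus {\mathcal H}_f^\perp$ of Lemma \ref{l:inf-cond}(2) and on the elementary fact that $\Im f \cap [\Im f]^\perp = \{0\}$.
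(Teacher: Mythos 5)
Your proposal is correct and follows exactly the paper's own route: the paper proves this lemma by citing the finite-dimensional Lemma \ref{lem:equ-imf-345}, whose proof likewise combines the equivalence of the orthogonality condition with ${\mathcal H}_f^\perp \subseteq [\Im f]^\perp$ (via Lemma \ref{l:or-dcomp}, here \ref{l:or-dcomp-2}), the decomposition $V = \Im f \oplus {\mathcal H}_f^\perp$, and $\Im f \cap [\Im f]^\perp = \{0\}$. Your write-up merely makes explicit the promotion of the inclusion to an equality, which the paper leaves terse, and correctly notes that admissibility (i.e.\ $V = \Im f \oplus [\Im f]^\perp$) is never needed.
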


\begin{proof} This statement is the generalization of Lemma \ref{lem:equ-imf-345} to arbitrary vector spaces.

\end{proof}

Moreover, similar to Lemma \ref{lem:equ-kerf-345} one has that: 

\begin{lem} \label{lem:equ-kerf-inf}  If $(V, g)$ is an inner product vector space over $k$, $f \in \ed_{k} (V)$, and ${\mathcal H}_f = \{H_i\}_{i\in I}$ with $V =   \underset {i\in I} \bigoplus H_i$ and each subspace $H_i$ is $f$-invariant, then ${\widetilde {\mathcal H}_f}^\perp = [\Ker f]^\perp$ if and only if  $$[\Ker f_i]_i^\perp \subseteq [\sum_{j\ne i} \Ker f_j]^\perp \text{ for every }i \in I\, .$$

\end{lem}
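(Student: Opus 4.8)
The plan is to mirror the one-line proof of Lemma~\ref{lem:equ-imf-345}, replacing the image by the kernel throughout. The two structural inputs I would rely on are the decompositions supplied by Lemma~\ref{l:inf-cond}: namely $\Ker f = \underset{i\in I}\bigoplus \Ker f_i$ (part (3)) and $V = \Ker f \oplus {\widetilde {\mathcal H}_f}^\perp$ (part (4)). The first of these shows that the pair $U = \Ker f$ and $U_i = \Ker f_i \subseteq H_i$ satisfies the hypotheses of Lemma~\ref{l:or-dcomp-2}, which is what makes that lemma applicable here.

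First I would invoke Lemma~\ref{l:or-dcomp-2} with this choice of $U$ and $U_i$. Since ${\widetilde {\mathcal H}_f}^\perp = \underset{i\in I}\bigoplus [\Ker f_i]_i^\perp$ by definition, that lemma yields the equivalence
$$
{\widetilde {\mathcal H}_f}^\perp \subseteq [\Ker f]^\perp
\Longleftrightarrow
[\Ker f_i]_i^\perp \subseteq [\sum_{j\ne i} \Ker f_j]^\perp \text{ for all } i\in I\, .
$$
This already settles one direction: if ${\widetilde {\mathcal H}_f}^\perp = [\Ker f]^\perp$, then in particular ${\widetilde {\mathcal H}_f}^\perp \subseteq [\Ker f]^\perp$, and the componentwise condition follows.

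For the converse I would upgrade the inclusion ${\widetilde {\mathcal H}_f}^\perp \subseteq [\Ker f]^\perp$ to an equality using the direct-sum decomposition. Given $v\in [\Ker f]^\perp$, write $v = u + w$ with $u\in \Ker f$ and $w\in {\widetilde {\mathcal H}_f}^\perp$ according to Lemma~\ref{l:inf-cond}(4). Because $w\in {\widetilde {\mathcal H}_f}^\perp \subseteq [\Ker f]^\perp$ and $[\Ker f]^\perp$ is a subspace, the element $u = v - w$ lies in $[\Ker f]^\perp$ as well; but $u\in \Ker f$ and $\Ker f \cap [\Ker f]^\perp = \{0\}$ by positive-definiteness of $g$, so $u = 0$ and $v = w\in {\widetilde {\mathcal H}_f}^\perp$. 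This proves $[\Ker f]^\perp \subseteq {\widetilde {\mathcal H}_f}^\perp$, and combined with the inclusion supplied by the hypothesis it gives the desired equality.

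I expect no genuine obstacle: the content of the lemma is entirely carried by Lemma~\ref{l:or-dcomp-2} and Lemma~\ref{l:inf-cond}, both already in hand. The only point deserving care is confirming that $U_i = \Ker f_i$ is indeed a subspace of $H_i$ with $\underset{i\in I}\bigoplus \Ker f_i = \Ker f$, which is precisely Lemma~\ref{l:inf-cond}(3); once that is noted, Lemma~\ref{l:or-dcomp-2} applies verbatim and the whole argument is the faithful kernel-analogue of the proof of Lemma~\ref{lem:equ-imf-345}.
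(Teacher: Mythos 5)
Your proposal is correct and follows exactly the route the paper intends: the paper proves this lemma only by the remark ``similarly'' (pointing back to the proof of Lemma~\ref{lem:equ-imf-345}, which combines the direct-sum decomposition with Lemma~\ref{l:or-dcomp}), and your argument is precisely that kernel analogue, using Lemma~\ref{l:or-dcomp-2} for the equivalence with the inclusion and Lemma~\ref{l:inf-cond}(3)--(4) together with $\Ker f \cap [\Ker f]^\perp = \{0\}$ to upgrade the inclusion to an equality. Your write-up in fact supplies the details the paper leaves implicit, so nothing further is needed.
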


\medskip

\begin{thm} \label{t:char}  If $(V, g)$ is  an inner product vector space over $k$, $f \in \ed_{k} (V)$ is admissible  for the Moore-Penrose inverse, and ${\mathcal H}_f = \{H_i\}_{i\in I}$ with $V = \underset {i\in I} \bigoplus H_i$ and each subspace $H_i$ is $f$-invariant, then  $f_{{\mathcal H}_f}^+ = f^\dagger$ if and only if the following conditions are satisfied:

\begin{enumerate}

\item $[\Im f_i]_i^\perp \subseteq [\sum_{j\ne i} \Im f_j]^\perp$ for every $i \in I$;

\item $[\Ker f_i]_i^\perp \subseteq [\sum_{j\ne i} \Ker f_j]^\perp$ for all $i \in I$.

\end{enumerate}

\end{thm}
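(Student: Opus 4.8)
The plan is to compare the two linear maps $f^\dagger$ and $f_{{\mathcal H}_f}^+$ through explicit descriptions of their actions, after translating the two subspace conditions into the geometric equalities ${\mathcal H}_f^\perp = [\Im f]^\perp$ and ${\widetilde {\mathcal H}_f}^\perp = [\Ker f]^\perp$. Indeed, condition (1) is exactly equivalent to ${\mathcal H}_f^\perp = [\Im f]^\perp$ by Lemma~\ref{lem:equ-imf-inf}, and condition (2) is exactly equivalent to ${\widetilde {\mathcal H}_f}^\perp = [\Ker f]^\perp$ by Lemma~\ref{lem:equ-kerf-inf}. Hence it suffices to show that $f_{{\mathcal H}_f}^+ = f^\dagger$ holds if and only if both of these geometric equalities hold.

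The first step is to record the explicit description of $f_{{\mathcal H}_f}^+$, exactly as in the finite-dimensional Lemma~\ref{lem:prop-dar-finite}(5). Since $[f_{{\mathcal H}_f}^+]_{\vert_{H_i}} = f_i^\dagger$, and on each block $f_i^\dagger$ kills $[\Im f_i]_i^\perp$ and restricts to the inverse of the isomorphism $f_i \colon [\Ker f_i]_i^\perp \to \Im f_i$, summing over $i \in I$ and invoking Lemma~\ref{l:inf-cond} I obtain $\Ker f_{{\mathcal H}_f}^+ = {\mathcal H}_f^\perp$, $\Im f_{{\mathcal H}_f}^+ = {\widetilde {\mathcal H}_f}^\perp$, and
$$f_{{\mathcal H}_f}^+ (v) = \left\{ \begin{aligned} (f_{\vert_{\widetilde {\mathcal H}_f^\perp}})^{-1} (v) \quad &\text{ if } \quad v \in \Im f \\ 0 \quad \quad &\text{ if } \quad v \in {\mathcal H}_f^\perp \end{aligned} \right. .$$
On the other hand, the explicit formula for $f^\dagger$ given in Theorem~\ref{th:MThTN} furnishes the analogous description with ${\widetilde {\mathcal H}_f}^\perp$ and ${\mathcal H}_f^\perp$ replaced by $[\Ker f]^\perp$ and $[\Im f]^\perp$; in particular $\Ker f^\dagger = [\Im f]^\perp$ and $\Im f^\dagger = [\Ker f]^\perp$.

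For the ``if'' direction I assume conditions (1) and (2), so that ${\mathcal H}_f^\perp = [\Im f]^\perp$ and ${\widetilde {\mathcal H}_f}^\perp = [\Ker f]^\perp$. Decomposing $V = \Im f \oplus [\Im f]^\perp$, both maps vanish on $[\Im f]^\perp = {\mathcal H}_f^\perp$, while on $\Im f$ they equal, respectively, $(f_{\vert_{[\Ker f]^\perp}})^{-1}$ and $(f_{\vert_{\widetilde {\mathcal H}_f^\perp}})^{-1}$; since ${\widetilde {\mathcal H}_f}^\perp = [\Ker f]^\perp$, these are the inverse of one and the same restricted isomorphism, hence coincide. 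Therefore $f_{{\mathcal H}_f}^+ = f^\dagger$. I emphasize that at this point equality of kernels and of images alone would not suffice, since two linear maps may share both a kernel and an image without being equal; this is precisely why the block-by-block description established in the previous step is indispensable.

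For the ``only if'' direction I start from the hypothesis $f_{{\mathcal H}_f}^+ = f^\dagger$. Equal maps have equal kernels and equal images, so the identifications above force ${\mathcal H}_f^\perp = \Ker f_{{\mathcal H}_f}^+ = \Ker f^\dagger = [\Im f]^\perp$ and ${\widetilde {\mathcal H}_f}^\perp = \Im f_{{\mathcal H}_f}^+ = \Im f^\dagger = [\Ker f]^\perp$. Applying Lemma~\ref{lem:equ-imf-inf} to the first equality yields condition (1), and applying Lemma~\ref{lem:equ-kerf-inf} to the second yields condition (2), which finishes the argument. The only genuinely non-formal point in the whole proof is the passage from the per-block behaviour of the maps $f_i^\dagger$ to the global description of $f_{{\mathcal H}_f}^+$, that is, checking that the direct-sum decompositions of Lemma~\ref{l:inf-cond} are compatible with the kernels and images of the summands; once this compatibility is secured, everything else is bookkeeping with the already-proven equivalences.
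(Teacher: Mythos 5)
Your proof is correct and follows essentially the same route as the paper: the paper's own (one-line) proof reduces the statement to Lemma~\ref{lem:equ-imf-inf} and Lemma~\ref{lem:equ-kerf-inf}, exactly the translation you perform, with the block-by-block description of $f_{{\mathcal H}_f}^+$ via Lemma~\ref{l:inf-cond} left implicit. Your write-up simply makes explicit the details the paper omits, including the useful observation that the ``only if'' direction needs only the equality of kernels and images.
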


\begin{proof} The claim is a direct consequence of Lemma \ref{lem:equ-imf-inf} and Lemma \ref{lem:equ-kerf-inf}.
\end{proof}

\begin{cor} \label{c:char}   If $(V, g)$ is an inner product vector space over $k$, $f \in \ed_{k} (V)$, ${\mathcal H}_f = \{H_i\}_{i\in I}$ with $V = \underset {i\in I} \bigoplus H_i$ and each subspace $H_i$ is $f$-invariant, and $H_i \subseteq  [\sum_{j\ne i} H_j]^\perp$ for every $i\in I$, then $f$ is admissible  for the Moore-Penrose inverse and $f_{{\mathcal H}_f}^+ = f^\dagger$.
\end{cor}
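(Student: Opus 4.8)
The plan is to show first that the orthogonality hypothesis $H_i \subseteq [\sum_{j\ne i} H_j]^\perp$ forces the two inclusion conditions (1) and (2) of Theorem \ref{t:char}, and then to use those conditions to deduce \emph{both} admissibility and the equality $f_{{\mathcal H}_f}^+ = f^\dagger$. First I would note that, since $\Im f_i \subseteq H_i$ and $\Ker f_i \subseteq H_i$ for every $i$, one has $\sum_{j\ne i}\Im f_j \subseteq \sum_{j\ne i} H_j$ and $\sum_{j\ne i}\Ker f_j \subseteq \sum_{j\ne i} H_j$; taking orthogonal complements reverses these inclusions, so $[\sum_{j\ne i} H_j]^\perp$ is contained in both $[\sum_{j\ne i}\Im f_j]^\perp$ and $[\sum_{j\ne i}\Ker f_j]^\perp$. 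On the other hand $[\Im f_i]_i^\perp$ and $[\Ker f_i]_i^\perp$ are by definition subspaces of $H_i$, and by hypothesis $H_i \subseteq [\sum_{j\ne i} H_j]^\perp$. Chaining these inclusions yields $[\Im f_i]_i^\perp \subseteq [\sum_{j\ne i}\Im f_j]^\perp$ and $[\Ker f_i]_i^\perp \subseteq [\sum_{j\ne i}\Ker f_j]^\perp$ for every $i\in I$, which are exactly conditions (1) and (2).

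The subtle point, and the step I expect to require the most care, is that Theorem \ref{t:char} presupposes that $f$ is admissible for the Moore-Penrose inverse, so admissibility cannot simply be read off from that theorem and must be established separately before it can be invoked. To do so I would appeal to Lemma \ref{lem:equ-imf-inf} and Lemma \ref{lem:equ-kerf-inf}, whose hypotheses are precisely conditions (1) and (2) just verified; they give ${\mathcal H}_f^\perp = [\Im f]^\perp$ and ${\widetilde {\mathcal H}_f}^\perp = [\Ker f]^\perp$. Feeding these equalities into parts (2) and (4) of Lemma \ref{l:inf-cond}, which assert $V = \Im f \oplus {\mathcal H}_f^\perp$ and $V = \Ker f \oplus {\widetilde {\mathcal H}_f}^\perp$, immediately yields $V = \Im f \oplus [\Im f]^\perp$ and $V = \Ker f \oplus [\Ker f]^\perp$. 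By Definition \ref{def:inf-Mo-Pen-adm} this is exactly the statement that $f$ is admissible for the Moore-Penrose inverse.

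With admissibility in hand and conditions (1) and (2) already verified, all hypotheses of Theorem \ref{t:char} are met, and that theorem delivers $f_{{\mathcal H}_f}^+ = f^\dagger$, completing the argument. Thus the whole proof reduces to getting the order of the steps right: one cannot appeal to Theorem \ref{t:char} at the outset because admissibility is not assumed, so the orthogonality must first be converted into the inclusion conditions, these fed through Lemmas \ref{lem:equ-imf-inf}, \ref{lem:equ-kerf-inf}, and \ref{l:inf-cond} to produce the two orthogonal direct-sum decompositions, and only then is Theorem \ref{t:char} applicable. Everything beyond this bookkeeping is a routine chain of inclusions.
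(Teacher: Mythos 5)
Your proof is correct and takes essentially the same route as the paper's, whose entire argument is the one-line remark that the hypotheses of Theorem \ref{t:char} are satisfied. Your write-up additionally supplies the detail the paper leaves implicit --- that admissibility is itself a hypothesis of Theorem \ref{t:char} and must first be derived from the orthogonality assumption via Lemmas \ref{lem:equ-imf-inf}, \ref{lem:equ-kerf-inf} and parts (2) and (4) of Lemma \ref{l:inf-cond} --- which is exactly the right way to fill that gap.
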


\begin{proof} With the hypothesis of this Corollary the conditions of Theorem  \ref{t:char} are satisfied. 

\end{proof}

\medskip

\begin{exam} \label{ex:moore-fini-pot}

Let $(V,g)$ be an inner product vector space of countable dimension over $k$. Let $\{{ {v_1}},{ {v_2}},{ {v_3}},\dots\}$ be an orthonormal basis of $V$ indexed by the natural numbers. 

Let $\varphi \in \ed_{k} (V)$ the finite potent endomorphism defined as follows: 
$$\varphi ({ {v_i}}) = \left \{\begin{aligned} { {v_{2}}} +  { {v_{5}}}   +  { {v_{7}}} \quad   &\text{ if } \quad i = 1 \\   { {v_{1}}} +  3{ {v_{2}}} \quad   \quad   &\text{ if } \quad i = 2 \\    { {v_{4}}}  \qquad   \quad   &\text{ if } \quad i = 3 \\   { {v_{1}}} - { {v_{3}}} \quad   \quad   &\text{ if } \quad i = 4 \\ -{ {v_{3}}} +  2{ {v_{5}}}   +  2{ {v_{7}}} \quad   &\text{ if } \quad i = 5 \\    3{ {v_{i+1}}} \qquad \quad  &\text{ if } \quad i = 5h + 1 \\  { {0}} \qquad \quad   &\text{ if } \quad i = 5h + 2 \\ -{ {v_{i-2}}} + 2{ {v_{i+1}}}  \quad  &\text{ if } \quad i = 5h + 3 \\ { {v_{i-2}}} + { {v_{i+1}}} \quad   &\text{ if } \quad i = 5h + 4 \\ - { {v_{i-4}}} + 5{ {v_{i-3}}}  \quad  &\text{ if } \quad i = 5h + 5    \end{aligned} \right .$$\noindent for all $h\geq 1$.

We can consider the decomposition $V = \bigoplus_{i\in \mathbb N} H_i$ where $$H_1 = \langle v_i\rangle_{i\in \{1,\dots,10\}} \text{ and }  H_j = \langle v_{5j+1}, v_{5j+2}, v_{5j+3}, v_{5j+4}, v_{5j+5}  \rangle$$\noindent for every $j\geq 2$. It is clear that $H_i$ is a $\varphi$-invariant subspace of $V$ for every $i\in \mathbb N$.

In the above bases one has that $$\varphi_{\vert_{H_1}}\equiv \begin{pmatrix} 0 & 1 & 0 & 1 & 0 & 0 & 0 & 0 & 0 & 0 \\  1 & 3 & 0 & 0 & 0 & 0 & 0 & 0 & 0 & 0 \\ 0 & 0 & 0 & -1 & -1 & 0 & 0 & 0 & 0 & 0 \\ 0 & 0 & 1 & 0 & 0 & 0 & 0 & 0 & 0 & 0 \\ 1 & 0 & 0 & 0 & 2 & 0 & 0 & 0 & 0 & 0 \\  0 & 0 & 0 & 0 & 0 & 0 & 0 & -1 & 0 & -1 \\
 1 & 0 & 0 & 0 & 2 & 3 & 0 & 0 & 1 & 5 \\
 0 & 0 & 0 & 0 & 0 & 0 & 0 & 0 & 0 & 0 \\
 0 & 0 & 0 & 0 & 0 & 0 & 0 & 2 & 0 & 0 \\
 0 & 0 & 0 & 0 & 0 & 0 & 0 & 0 & 1 & 0  \end{pmatrix}$$ \noindent 

and $$\varphi_{\vert_{H_i}}\equiv \begin{pmatrix}
0 & 0 & -1 & 0 & -1 \\
3 & 0 & 0 & 1 & 5 \\
0 & 0 & 0 & 0 & 0 \\
0 & 0 & 2 & 0 & 0 \\
0 & 0 & 0 & 1 & 0 
\end{pmatrix}$$\noindent  for all $i\geq 2$.

 Thus, bearing in mind that:

\begin{itemize}

\item $\Im \varphi = \langle v_1, v_2, v_3, v_4, v_5 \rangle \oplus [ \underset {i\geq 1} \oplus \langle v_{5j+1}, v_{5j+2},  v_{5j+4}, v_{5j+5}  \rangle]$ and $[\Im \varphi]^\perp =\underset {i\geq 1} \oplus \langle {v_{5i+3}}  \rangle$;

\item $\Ker \varphi= \underset {i\geq 1} \oplus  \langle v_{5i +2} \rangle$ and $[\Ker \varphi]^\perp = \langle v_1, v_2, v_3, v_4, v_5 \rangle \oplus [\underset {i\geq 1} \oplus \langle v_{5j+1}, v_{5j+3},  v_{5j+4}, v_{5j+5}\rangle]$;

\end{itemize}

we have that $\varphi$ is admissible for the Moore-Penrose inverse and, since $\{v_i\}_{i\in I}$ is an orthonormal basis of $V$, Corollary \ref{c:char}  holds for $\varphi$.

Accordingly, a non-difficult computation shows that $$(\varphi^\dagger)_{\vert_{H_1}} \equiv \begin{pmatrix}
6 & -2 & 6 & 0 & 3 & 0 & 0 & 0 & 0 & 0 \\
-2 & 1 & -2 & 0 & -1 & 0 & 0 & 0 & 0 & 0 \\
0 & 0 & 0 & 1 & 0 & 0 & 0 & 0 & 0 & 0 \\
3 & -1 & 2 & 0 & 1 & 0 & 0 & 0 & 0 & 0 \\
-3 & 1 & -3 & 0 & -1 & 0 & 0 & 0 & 0 & 0 \\[4pt]  0 & 0 & 0 & 0 & -\frac13 & \frac{5}{3} & \frac{1}{3} & 0 & \frac{5}{6} & -\frac{1}{3} \\[4pt]
 3 & -1 & 2 & 0 & -1 & 0 & 0 & 0 & 0 & 0 \\[4pt]
 0 & 0 & 0 & 0 & 0 & 0 & 0 & 0 & \frac{1}{2} & 0 \\[4pt]
 0 & 0 & 0 & 0 & 0 & 0 & 0 & 0 & 0 & 1 \\[4pt]
 0 & 0 & 0 & 0 & 0 & -1 & 0 & 0 & -\frac{1}{2} & 0 
\end{pmatrix}$$ 

and

 $$ (\varphi^\dagger)_{\vert_{H_i}}\equiv \begin{pmatrix}
\frac{5}{3} & \frac{1}{3} & 0 & \frac{5}{6} & -\frac{1}{3} \\
0 & 0 & 0 & 0 & 0 \\
0 & 0 & 0 & \frac{1}{2} & 0 \\
0 & 0 & 0 & 0 & 1 \\
-1 & 0 & 0 & -\frac{1}{2} & 0 
\end{pmatrix}$$\noindent for all $i\geq 2$, from where the endomorphism $\varphi^\dagger$ is determined.

Thus, the explicit expression of $\varphi^\dagger$ is

$$\varphi^\dagger ({{v_{i}}}) = \left \{\begin{aligned}6{ {v_{1}}} - 2 { {v_{2}}} + 3{ {v_{4}}} -3  { {v_{5}}}   - 3 { {v_{7}}} \qquad \quad   &\text{ if } \quad i = 1 \\   -2{ {v_{1}}} + { {v_{2}}} - { {v_{4}}} +  { {v_{5}}}   +  { {v_{7}}} \qquad   \quad   &\text{ if } \quad i = 2 \\    6{ {v_{1}}} - 2 { {v_{2}}} + 2{ {v_{4}}} -3  { {v_{5}}}   - 3 { {v_{7}}} \qquad   \quad   &\text{ if } \quad i = 3 \\    { {v_{3}}}  \qquad \qquad \qquad \qquad   \quad   &\text{ if } \quad i = 4 \\ 3{ {v_{1}}} -  { {v_{2}}} + { {v_{4}}} -  { {v_{5}}} -\frac13 v_6   -  { {v_{7}}} \qquad \quad   &\text{ if } \quad i = 5 \\  \frac53 { {v_{i}}}  -{{v_{i+4}}}  \qquad  \qquad  \quad  &\text{ if } \quad i = 5h + 1 \\[4pt]  \frac13 { {v_{i-1}}} \qquad  \qquad  \qquad \quad   &\text{ if } \quad i = 5h + 2 \\[4pt] \qquad  { {0}} \qquad   \qquad \qquad \qquad  \quad  &\text{ if } \quad i = 5h + 3 \\[4pt] \frac56 { {v_{i-3}}}+\frac{1}{2} v_{i-1}-\frac12 { {v_{i+1}}}\ \quad\quad   &\text{ if } \quad i = 5h + 4 \\[4pt] - \frac13 { {v_{i-4}}} +{ {v_{i-1}}} \quad  \qquad  \quad  &\text{ if } \quad i = 5h + 5    \end{aligned} \right . $$\noindent for all $h\geq 1$.
\end{exam}

\medskip

\begin{rem} We wish point out that this example shows that the Moore-Penrose inverse of a finite potent endomorphism is not, in general, a finite potent endomorphism.
\end{rem}

\medskip

\section{Study of Infinite systems of linear equations from the Moore-Penrose Inverse} \label{s:study}

The aim of this final section is to study solutions of infinite system of linear equations from the Moore-Penrose inverse of linear maps characterized in the previous section.

\begin{defn} If $V$ and $W$ are two arbitrary $k$-vector spaces and $f\colon V \to W$ is a linear map, a  linear system  is an expression $$f (x) = w\, ,$$\noindent where $w\in W$. This system is called ``consistent'' when $w\in \Im f$.
\end{defn}

If  $V$ and $W$ are infinite-dimensional vector spaces, fixing bases of $V$ and $W$, the linear system $f (x) = w$ is equivalent to an infinite system of linear equations.

If a linear system $f (x) = w$ is consistent and $f (v_0) = w$ for a certain $v_0\in V$, then the set of solutions of this system is $v_0 + \Ker f$. The vector $v_0$ is named ``particular solution'' of the system.

Let us now consider two arbitrary  inner product vector spaces $(V,g)$ and $(W,{\bar g})$  over $k$, let $f \colon V \to W$ be a linear map admissible for the Moore-Penrose inverse and let $f^\dagger$ be its Moore-Penrose inverse.  If $f (x) = w$ is a linear system , since $f^\dagger$ is a reflexive generalized inverse of $f$, then $f$ is consistent if and only if $(f \circ f^\dagger) (w) = w$ and, in this case, the set of solutions of the linear system is $$f^\dagger (w) + \Ker f\, .$$

In finite-dimensional inner product vector spaces it is known that the Moore-Penrose inverse is useful for studying the least squares solutions of a linear system. To complete this work, we shall generalize this notion to arbitrary vector spaces.

\begin{defn} If $(V,g)$ and $(W,{\bar g})$  are two arbitrary inner product vector spaces over $k$, then ${v'} \in V$ is called ``least ${\bar g}$-norm solution'' of a linear system $f (x) = w$ when $$\Vert f(v') - w\Vert_{\bar g} \leq \Vert f({v}) - w\Vert_{\bar g}$$\noindent for all $v\in V$.
\end{defn}

Note that ${ v'} \in V$ is a least ${\bar g}$-norm solution of the linear system $f (x) = w$ if and only if $d_{\bar g} (w, f({v'})) = d_g (w, \Im f)$.

\begin{defn} If $(V,g)$ and $(W,{\bar g})$  are two arbitrary inner product vector spaces over $k$, then ${\tilde v} \in V$ is called ``minimal least ${\bar g}$-norm solution'' of a linear system $f (x) = w$ when $$\Vert {\tilde v} \Vert_{g} \leq \Vert v'\Vert_{g}$$\noindent for every least ${\bar g}$-norm solution $v' \in V$.
\end{defn}

\medskip

\begin{prop} \label{p:study-least} If $(V,g)$ and $(W,{\bar g})$ are two arbitrary inner product vector spaces over $k$, $f \colon V \to W$ is a linear map admissible for the Moore-Penrose inverse and $f (x) = w$ is a linear system, then $f^\dagger (w)$ is the unique minimal least ${\bar g}$-norm solution of this linear system.
\end{prop}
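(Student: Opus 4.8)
The plan is to decouple the single statement into two nested minimization problems, each of which is solved by an orthogonal (Pythagorean) splitting supplied by the admissibility hypothesis. First I would identify the residual of the candidate solution. Since $f \circ f^\dagger = P_{\Im f}$ is the projection attached to the decomposition $W = \Im f \oplus [\Im f]^\perp$, one has $f(f^\dagger(w)) = P_{\Im f}(w)$, the orthogonal projection of $w$ onto $\Im f$. Writing $w = P_{\Im f}(w) + w_\perp$ with $w_\perp \in [\Im f]^\perp$, for arbitrary $v \in V$ the vector $f(v) - P_{\Im f}(w)$ lies in $\Im f$ and is therefore ${\bar g}$-orthogonal to $w_\perp$; orthogonality of $a,b$ forces $\bar g(a,b)=\bar g(b,a)=0$, so the Pythagorean identity gives
$$\Vert f(v) - w\Vert_{{\bar g}}^2 = \Vert f(v) - P_{\Im f}(w)\Vert_{{\bar g}}^2 + \Vert w_\perp\Vert_{{\bar g}}^2 \geq \Vert w_\perp\Vert_{{\bar g}}^2\, ,$$
with equality exactly when $f(v) = P_{\Im f}(w)$. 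Because $f(f^\dagger(w)) = P_{\Im f}(w)$, this shows $f^\dagger(w)$ is a least ${\bar g}$-norm solution and, moreover, that the set of all least ${\bar g}$-norm solutions is precisely the coset $f^\dagger(w) + \Ker f$.

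Second, I would minimize the $g$-norm over this coset. The crucial input is that $f^\dagger(w) \in \Im f^\dagger = [\Ker f]^\perp$, which is one of the consequences of Theorem \ref{th:MThTN} recorded in the subsequent Corollary. Hence, for any $u \in \Ker f$, the vectors $f^\dagger(w)$ and $u$ are $g$-orthogonal, and a second Pythagorean identity yields
$$\Vert f^\dagger(w) + u\Vert_g^2 = \Vert f^\dagger(w)\Vert_g^2 + \Vert u\Vert_g^2 \geq \Vert f^\dagger(w)\Vert_g^2\, ,$$
with equality if and only if $u = 0$. This proves at once that $f^\dagger(w)$ is a minimal least ${\bar g}$-norm solution and that it is the unique such solution.

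The argument is short precisely because both minimizations reduce to the same elementary principle: a point of a subspace is closest to a given vector exactly when the difference is orthogonal to the subspace. I do not expect a genuine obstacle so much as a single place where hypotheses must be handled with care, namely the admissibility of $f$. It is exactly the two orthogonal direct-sum decompositions $V = \Ker f \oplus [\Ker f]^\perp$ and $W = \Im f \oplus [\Im f]^\perp$ that legitimize the Pythagorean splittings; without them neither the orthogonal projection $P_{\Im f}$ nor the containment $f^\dagger(w) \in [\Ker f]^\perp$ would be available. I would therefore invoke the already-established consequences of Theorem \ref{th:MThTN} ($f \circ f^\dagger = P_{\Im f}$ and $\Im f^\dagger = [\Ker f]^\perp$) rather than re-deriving them, and take care that the complex-conjugate symmetry of $g$ and ${\bar g}$ makes the Pythagorean identities valid over both $k = \mathbb{R}$ and $k = \mathbb{C}$.
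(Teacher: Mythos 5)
Your proof is correct and follows essentially the same route as the paper: the same Pythagorean splitting in $W$ (using that $f(f^\dagger(w)) - w \in [\Im f]^\perp$, which you phrase as $f\circ f^\dagger = P_{\Im f}$) to identify the least ${\bar g}$-norm solutions as the coset $f^\dagger(w) + \Ker f$, followed by the same Pythagorean splitting in $V$ (using $f^\dagger(w) \in [\Ker f]^\perp$) to single out $f^\dagger(w)$ as the unique element of minimal $g$-norm in that coset. The only difference is cosmetic: you make explicit the second orthogonality identity, which the paper leaves as the terse assertion $\Vert f^\dagger(w)\Vert_g < \Vert v'\Vert_g$.
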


\begin{proof} Firstly, since $\Im (f \circ f^\dagger - \text{Id}) \subseteq [\Im f]^\perp$, one has that \begin{equation} \label{eq:least} \begin{aligned} \Vert f({v}) - w\Vert_{\bar g}^2 &= \Vert [f({v}) - f(f^\dagger (w))] +  [f(f^\dagger (w)) - w]\Vert_{\bar g}^2= \\ &= \Vert f({v}) - f(f^\dagger (w))\Vert_{\bar g}^2 + \Vert  f(f^\dagger (w)) - w\Vert_{\bar g}^2 \end{aligned} \end{equation} for all $v\in V$. Hence, $$\Vert f(f^\dagger(w)) - w\Vert_{\bar g} \leq \Vert f({v}) - w\Vert_{\bar g}$$\noindent for all $v\in V$, and we deduce that  $f^\dagger (w)$ is a least ${\bar g}$-norm solution of $f (x) = w$.

Moreover, it follows from (\ref{eq:least}) that $v'\in V$ is a least ${\bar g}$-norm solution of this linear system if and only if $f(v') - f (f^\dagger (w)) = 0$, that is, $v'$ is a solution of the consistent system $$f(x) - f(f^\dagger (w)) = 0\, .$$ Thus, for each  least $\bar g$-norm solution $v' \in V$, one has that $$f^\dagger (w) = v' + h\, ,$$\noindent with $h\in \Ker f$ and, bearing in mind that $f^\dagger (w) \in [\Ker f]^\perp$, we conclude that $f^\dagger (w)$ is the unique minimal least $\bar g$-norm solution of $f(x) = w$ because $$\Vert f^\dagger (w) \Vert_g  < \Vert v'\Vert_g\, ,$$\noindent for every  $v' \ne f^\dagger (w)$.
\end{proof}

\medskip

\begin{exam}
Let $(V,g)$ be an inner product vector space of countable dimension over $k$. Let $\{{ {v_1}},{ {v_2}},{ {v_3}},\dots\}$ be an orthonormal basis of $V$ indexed by the natural numbers. If $(x_i)_{i\in\mathbb{N}} \in \underset {i\in \mathbb N} \bigoplus k$, since $x_i=0$ for almost all $i\in\mathbb{N}$, we shall write $x = (x_i)$ to denote the well-defined vector $$x = \sum_{i\in \mathbb N} x_i\cdot v_i \in V\, .$$

Let $\varphi \in \ed_{k} (V)$ the finite potent endomorphism studied in Example \ref{ex:moore-fini-pot}. We can consider the system $$\varphi(x)=w,$$ where $w=(\alpha_i)_{i\in\mathbb{N}}$ and whose explicit expression is: 
\begin{equation} \label{eq:final} \left \{ \begin{aligned} x_2 + x_4  &= \alpha_1 \\ x_1 + 3x_2 &=  \alpha_2   \\ -x_4 - x_5 &=  \alpha_3  \\  x_3 &=  \alpha_4  \\  x_1 + 2x_5 &=  \alpha_5 \\  -x_8 - x_{10} &= \alpha_6 \\  x_1 + 2x_5 + 3x_6 + x_9 + 5 x_{10} &= \alpha_7  \\   0 &= \alpha_{5h+3}  \text{ for all } h \geq 1 \\ 2x_{5h +3} &= \alpha_{5h+4}    \text{ for all } h \geq 1 \\  x_{5h + 4} &= \alpha_{5h+5}   \text{ for all } h \geq 1 \\ -x_{5h+3} - x_{5h+5} &=  \alpha_{5h+1}   \text{ for all } h \geq 2 \\  3x_{5h + 1} + x_{5h+4} + 5x_{5h+5} &=  \alpha_{5h+2} \text{ for all } h \geq 2 \end{aligned} \right . \, . \end{equation}\\

 By Proposition \ref{p:study-least}, we have that the unique minimal least $g$-norm solution of this linear system is $(\beta_i)_{i\in\mathbb{N}} = \varphi^\dagger (w)$ and, bearing in mind the expression of $\varphi^\dagger$ obtained in  Example \ref{ex:moore-fini-pot}, an easy computation shows that $$\beta_i=\left \{\begin{aligned} 
6\alpha_1-2\alpha_2+6\alpha_3+3\alpha_5 \quad \quad    &\text{ if } \quad i = 1 \\   
-2\alpha_1+\alpha_2-2\alpha_3-\alpha_5 \quad  \quad \quad   &\text{ if } \quad i = 2 \\    
\alpha_4  \qquad\qquad\qquad\quad   &\text{ if } \quad   i = 3 \\   
3\alpha_1-\alpha_2+2\alpha_3+\alpha_5 \qquad   \quad   &\text{ if } \quad i = 4 \\ 
-3\alpha_1+\alpha_2-3\alpha_3-\alpha_5 \qquad\quad   &\text{ if } \quad i = 5 \\    
-\frac{1}{3}\alpha_5+\frac{5}{3}\alpha_6+\frac{1}{3}\alpha_7+\frac{5}{6}\alpha_9-\frac{1}{3}\alpha_{10} \quad  &\text{ if } \quad i = 6 \\  
3\alpha_1 - \alpha_2 + 2\alpha_3 - \alpha_5 \qquad\quad  &\text{ if } \quad i = 7 \text{, for all h} \geq 1 \\ 
\frac{1}{2}\alpha_{i+1} \qquad\qquad\qquad  &\text{ if } \quad i = 5h + 3 \text{, for all h} \geq 1 \\ 
\alpha_{i+1} \qquad\qquad\qquad   &\text{ if } \quad i = 5h + 4 \text{, for all h} \geq 1 \\ 
-\alpha_{i-4}-\frac{1}{2}\alpha_{i-1}  \qquad\qquad  &\text{ if } \quad i = 5h + 5 \text{, for all h} \geq 1 \\
\frac{5}{3}\alpha_i+\frac{1}{3}\alpha_{i+1}+\frac{5}{6}\alpha_{i+3}-\frac{1}{3}\alpha_{i+4} \quad  &\text{ if } \quad i = 5h + 1 \text{, for all h} \geq 2 \\  
0 \qquad\qquad\qquad\quad  &\text{ if } \quad i = 5h + 2 \text{, for all h} \geq 2   \end{aligned} \right .\, .$$

Moreover, one have that
$$(\varphi\circ\varphi^\dagger) ({v_i}) = \left \{\begin{aligned}{0}   \quad   &\text{ if } \quad i = 5h+3 \\
 { v_{i}} \quad   &\text{ otherwise }   \end{aligned} \right .\, .$$
 
Thus, according to (\ref{eq:char-im-rel}), the system (\ref{eq:final}) is consistent if and only if $\alpha_{5h+3}=0$ for all $h\geq 1$, and, in this case, $(\beta_i)_{i\in\mathbb{N}}$ is a particular solution of it.
\end{exam}

\medskip

\end{document}